\documentclass[letterpaper,12pt]{article}
\usepackage[margin=1in]{geometry}
\usepackage{enumerate}
\usepackage{enumitem}

\usepackage{amssymb}
\usepackage{amsmath}
\usepackage{amsthm}
\usepackage{mathrsfs}
\usepackage{mathtools}
\usepackage{algorithm}
\usepackage{algorithmicx}
\usepackage{algpseudocode}
\usepackage{bbm}
\usepackage{tikz}
\usepackage{standalone}
\usepackage{import} 
\usepackage{comment}
\usetikzlibrary{calc}
\usepackage{subcaption}
\definecolor{red1}{RGB}{230,25,75}
\definecolor{blue1}{RGB}{0,130,200}
\definecolor{yellow1}{RGB}{255, 225, 25}
\definecolor{green1}{RGB}{60,180,75}
\definecolor{darkgrey1}{RGB}{57, 59, 58}

\usepackage{cleveref}  
\Crefname{graph}{Graph}{Graphs}

\newtheorem{theorem}{Theorem}[section]
\newtheorem{proposition}[theorem]{Proposition}

\newtheorem{lemma}[theorem]{Lemma}
\newtheorem{claim}[theorem]{Claim}

\newtheorem{observation}[theorem]{Observation}
\newtheorem{definition}[theorem]{Definition}
\newtheorem{open}[theorem]{Open Problem}

\usepackage[isbn=false,doi=false,style=numeric,maxbibnames=50]{biblatex}

\addbibresource{MetricApproximation.bib}

\begin{document}
\title{Metric Approximations of Consistent Path Systems}

\author{Daniel Cizma\thanks{Einstein Institute of Mathematics, Hebrew University, Jerusalem 91904, Israel. e-mail: daniel.cizma@mail.huji.ac.il.} \and  {Nati Linial\thanks{School of Computer Science and Engineering, Hebrew University, Jerusalem 91904, Israel. e-mail: nati@cs.huji.ac.il.{~Supported in part by an ERC Grant 101141253, "Packing in Discrete Domains - Geometry and Analysis".}}}}
\date{}
\maketitle
\begin{abstract}
A {\em path system} $\mathcal{P}$ in a graph $G=(V,E)$ is a collection of paths, 
with exactly one path between any two vertices in $V$. 
A path system is said to be {\em consistent}
if it is closed under subpaths. We say that a path system $\mathcal{P}$ is 
$\alpha$-metric if there exists a metric $\rho$ on $V$ such that
$\sum_{i=1}^{k}\rho(x_{i-1},x_{i}) \le \alpha \rho(x_0,x_k)$
for every path $(x_0,x_1,\dots,x_k)\in \mathcal{P}$.
Also, we denote by $\Delta(\mathcal{P})$ the infimum of $\alpha$
for which $\mathcal{P}$ is $\alpha$-metric. We show that $\Delta(\mathcal{P}) \le O(\sqrt{n})$ for 
every $n$-point consistent path system $\mathcal{P}$. 
On the other hand, we construct infinitely many $n$-point consistent path systems
$\mathcal{P}_n$ with $\Delta(\mathcal{P}_n) \ge \tilde{\Omega}(\sqrt{n})$, 
showing these bounds are tight up to a polylogarithmic factor.
We also show how to efficiently compute $\Delta(\mathcal{P})$
for a given path system.
\end{abstract}
\section{Introduction}
Suppose we need to navigate a graph $G=(V,E)$. Specifically we wish to select 
an `optimal' route between any two given vertices. 
To this end, we choose a {\em path system}, i.e.,
a collection of paths in $G$ specifying a unique connecting path $P_{xy}=P_{yx}$ between 
each pair of vertices
$x,y$. A path system is said to be {\em consistent} if it is closed under taking subpaths. 
The most obvious examples of consistent path systems are obtained by 
assigning positive weights to the edges, 
$w:E\to \mathbb{R}_{>0}$, and letting $P_{xy}$ be the
unique $w$-shortest $xy$ path. Such a consistent path system is said to be
{\em strictly metric}. 
There is already a sizable body of work concerning these objects. 
The present work contributes to an ongoing research program that seeks to 
understand the properties of consistent path system, see \cite{BDG,BBW}. 
Bodwin \cite{Bo}
has characterized strictly metric path systems in topological terms, and as we
showed, strictly
metric path systems are way fewer than the non-metric variety, i.e.,
strictly metric consistent path systems
form a vanishingly small subclass of consistent path systems.
We also proved this by approximately
counting order-$n$ path systems of both varieties \cite{CL3}. 

Our purpose here is to develop a different
perspective of this comparison. Such comparisons are
abundant in mathematics. Thus, while the rationals $\mathbb{Q}$
constitute a dense subset of the reals $\mathbb{R}$, the theory of Diophantine
Approximations seeks to make this comparison quantitative and discover how well
real numbers can be approximated by rationals.
More specifically, our
notion of approximation is inspired by the standard measure of proximity
in the study of metric embeddings, see e.g., \cite{Bour,LLR}.
A consistent path system $\mathcal{P}$ on vertex set 
$X$ is metric iff there is a metric 
$\rho$ on $X$ such that for every path $(x_0,x_1,\dots,x_{k})\in \mathcal{P}$
there holds $\sum_{i=1}^{k}\rho(x_{i-1},x_{i}) = \rho(x_0,x_k)$.

Let $(X,\rho)$ be a finite metric space, and let $(x_0,x_1,\dots,x_{k})$
be a sequence of points in $X$. By the triangle inequality
\[\sum_{i=1}^{k}\rho(x_{i-1},x_{i}) \ge \rho(x_0,x_k).\]
Next let $\mathcal{P}$ be a consistent path system on vertex set $X$.
We say that $\mathcal{P}$ is $\alpha$-metric w.r.t.\ the metric
$\rho$, for some $\alpha \ge 1$
if every path $(x_0,x_1,\dots,x_{k})\in \mathcal{P}$ satisfies
\[\sum_{i=1}^{k}\rho(x_{i-1},x_{i}) \le \alpha\cdot \rho(x_0,x_k).\] 

Finally we let $\Delta(\mathcal{P})$ be the smallest $\alpha \ge 1$ such that 
$\mathcal{P}$ is $\alpha$-metric w.r.t.\ {\em some} metric on $X$.
We ask, as a function of $n$, how large $\Delta(\mathcal{P})$ can be for 
a consistent path system on $n$ points. In \cref{sec:upper} we prove that every consistent $n$-vertex path system satisfies
\[\Delta(\mathcal{P})\le O(\sqrt{n}).\]
In \cref{sec:lower} we show this upper bound is essentially tight by constructing $n$-vertex path systems $\mathcal{P}=\mathcal{P}_n$ for which 
\[\Delta(\mathcal{P}_n)\ge \tilde{\Omega}(\sqrt{n}).\]
Finally, in \cref{sec:realizable} we discuss the possible values that $\Delta(\mathcal{P})$ can take on, showing that $\Delta(\mathcal{P})$ is always algebraic and possibly irrational. Moreover, we show that the minimal polynomial of $\Delta(\mathcal{P})$ can be computed in time polynomial in $n$.

\section{Definitions and Background}\label{sec:background}
Let $G=(V,E)$ be a connected graph. A consistent path system $\mathcal{P}$ in $G$ is a collection of paths in $G$ such that 
\begin{enumerate}
	\item For every $u,v\in V$ there is unique $(uv)$-path $P_{u,v}\in \mathcal{P}$ with $P_{u,v} = P_{v,u}$.
	\item If $P\in \mathcal{P}$ and $x,y\in P$ then the $(xy)$-subpath of $P$ is $P_{x,y}\in \mathcal{P}$.
\end{enumerate}

We say $\mathcal{P}$ is {\em metric} if there exists a weight function 
$w:E\to \mathbb{R}_{>0}$ such that every path in $\mathcal{P}$ is a 
$w$-shortest path. 

As observed in \cite{CL3}, the notion of metric path systems
can also be captured in terms of collinearity. Let
$P=(x_0,x_1,\dots,x_k)$ be a path in a metric space $(V,d)$,
that is, a sequence of points from $V$,
and define $d(P) = \sum_{i=1}^{k} d(x_{i-1},x_{i})$.
We say that $P$ is $d$-collinear if $d(P) = d(x_{1},x_{k})$.
A path system $\mathcal{P}$ on $V$ 
is metric if there exists a metric $\rho$ on $V$
such that the paths in $\mathcal{P}$ are $\rho$-collinear, i.e.,
they satisfy $\rho(P_{u,v}) = \rho(u,v)$ for all $u,v\in V$. 
This definition naturally suggests an approximate notion of metric path systems: 
\begin{definition}
A consistent path system $\mathcal{P}$ on $V$ is 
called $\alpha$-metric if there exists a metric 
$(V,\rho)$ such that  
\[\rho(u,v)\le \rho(P_{u,v}) \le \alpha\cdot \rho(u,v)\]
for every $P_{u,v}\in \mathcal{P}$.
\end{definition}
In other words, a path system is $\alpha$-metric iff every path in 
the system is `$\rho$-sub-additive' 
after scaling distances by a factor of $\alpha\ge 1$. Note
that a path system is metric if and only if it is $1$-metric.\\
Equivalently, $\mathcal{P}$ is $\alpha$-metric if there exists weights 
$w:E\to \mathbb{R}_{>0}$ such that the inequality 
$$\sum_{i=1}^{k}w(x_{i-1}x_i) \le \alpha\cdot \sum_{i=1}^{m}w(y_{i-1}y_i)$$ holds
for every path $P_{u,v} = (x_0,\dots,x_k)$ in $\mathcal{P}$ and any other 
$(uv)$-path $Q=(y_0,\dots, y_m)$.

For a given path system we seek the smallest $\alpha\ge 1$ for which $\mathcal{P}$ 
is $\alpha$-metric. These considerations lead to the following definition:
\begin{definition}
	For a consistent path system $\mathcal{P}$ we define $\Delta(\mathcal{P})$ to be
	\[\Delta(\mathcal{P}) = \inf\{\alpha \ge1 : \mathcal{P} \text{ is $\alpha$-metric}\}.\]
\end{definition}
We start with the trivial bounds $1\le \Delta(\mathcal{P}) \le n-1$. 
To understand this upper bound, consider what happens if we assign unit weights to each edge.
The claim follows, since each path in the system has at most $n-1$ edges, whereas all
pairwise distances are at least $1$. Consequently, 
$\rho(P_{u,v}) = |P_{u,v}| \le n-1 \le (n-1)\cdot \rho(u,v)$. In fact, this argument shows that $\Delta(\mathcal{P}) \le \text{diam}(\mathcal{P})$, where $\text{diam}(\mathcal{P}) = \max_{P\in \mathcal{P}} |P|$.
\begin{figure}[ht]
	\centering
	\begin{subfigure}[t]{0.4\textwidth}
		\centering
		\begin{tikzpicture}[scale=0.5, every node/.style={scale=0.45}]

		\node[draw,circle,minimum size=.5cm,inner sep=.1pt, scale =1.3] (1) at (-0*360/5 +90: 5cm) [scale=1.6] {$1$};
		\node[draw,circle,minimum size=.5cm,inner sep=.1pt, scale =1.3] (2) at (-1*360/5 +90: 5cm) [scale=1.6]{$2$};
		\node[draw,circle,minimum size=.5cm,inner sep=.1pt, scale =1.3] (3) at (-2*360/5 +90: 5cm)[scale=1.6] {$3$};
		\node[draw,circle,minimum size=.5cm,inner sep=.1pt, scale =1.3] (4) at (-3*360/5 +90: 5cm) [scale=1.6]{$4$};
		\node[draw,circle,minimum size=.5cm,inner sep=.1pt, scale =1.3] (5) at (-4*360/5 +90: 5cm)[scale=1.6] {$5$};
		
		\node[draw,circle,minimum size=.5cm,inner sep=.1pt, scale =1.3] (6) at (-5*360/5 +90: 2.5cm) [scale=1.6]{$6$};
		\node[draw,circle,minimum size=.5cm,inner sep=.1pt, scale =1.3] (7) at (-6*360/5 +90: 2.5cm) [scale=1.6]{$7$};
		\node[draw,circle,minimum size=.5cm,inner sep=.1pt, scale =1.3] (8) at (-7*360/5 +90: 2.5cm) [scale=1.6]{$8$};
		\node[draw,circle,minimum size=.5cm,inner sep=.1pt, scale =1.3] (9) at (-8*360/5 +90: 2.5cm) [scale=1.6]{$9$};
		\node[draw,circle,minimum size=.5cm,inner sep=.1pt, scale =1.3] (10) at (-9*360/5 +90: 2.5cm)[scale=1.6] {$10$};
		
		\coordinate (c1) at ($(7)!0.5!(10)$);
		\coordinate (c2) at ($(6)!0.5!(8)$);
		\coordinate (c3) at ($(7)!0.5!(9)$);
		\coordinate (c4) at ($(8)!0.5!(10)$);
		\coordinate (c5) at ($(6)!0.5!(9)$);
		
		\draw [line width=3.1pt,-,red1] (1) -- (2);
		\draw [line width=3.1pt,-,blue1] (1) -- (5);
		\draw [line width=3.1pt,-,red1] (1) -- (6);
		\draw [line width=3.1pt,-,green1] (2) -- (3);
		\draw [line width=3.1pt,-,green1] (2) -- (7);
		\draw [line width=3.1pt,-,darkgrey1] (3) -- (4);
		\draw [line width=3.1pt,-,darkgrey1] (3) -- (8);
		\draw [line width=3.1pt,-,yellow1] (4) -- (5);
		\draw [line width=3.1pt,-,yellow1] (4) -- (9);
		\draw [line width=3.1pt,-,blue1] (5) -- (10);
		\draw [line width=3.1pt,-,red1] (6) -- (8);
		\draw [line width=3.1pt,-,yellow1] (6) -- (9);
		\draw [line width=3.1pt,-,green1] (7) -- (9);
		\draw [line width=3.1pt,-,blue1] (7) -- (10);
		\draw [line width=3.1pt,-,darkgrey1] (8) -- (10);
		
		\draw [line width=3.1pt,-,red1] (c2) -- (6);
		\draw [line width=3.1pt,-,yellow1] (c5) -- (9);

		\end{tikzpicture}
		\caption{A path system in the Petersen Graph}
		\label{fig:nonMetPetersen}
	\end{subfigure}
	\hspace{20mm}
	\begin{subfigure}[t]{0.4\textwidth}
		\centering
		\begin{tikzpicture}[scale=0.5, every node/.style={scale=0.45}]

		\node[draw,circle,minimum size=.5cm,inner sep=.1pt, scale =1.3] (1) at (-0*360/5 +90: 5cm) [scale=1.6] {$1$};
		\node[draw,circle,minimum size=.5cm,inner sep=.1pt, scale =1.3] (2) at (-1*360/5 +90: 5cm) [scale=1.6]{$2$};
		\node[draw,circle,minimum size=.5cm,inner sep=.1pt, scale =1.3] (3) at (-2*360/5 +90: 5cm)[scale=1.6] {$3$};
		\node[draw,circle,minimum size=.5cm,inner sep=.1pt, scale =1.3] (4) at (-3*360/5 +90: 5cm) [scale=1.6]{$4$};
		\node[draw,circle,minimum size=.5cm,inner sep=.1pt, scale =1.3] (5) at (-4*360/5 +90: 5cm)[scale=1.6] {$5$};
		
		\node[draw,circle,minimum size=.5cm,inner sep=.1pt, scale =1.3] (6) at (-5*360/5 +90: 2.5cm) [scale=1.6]{$6$};
		\node[draw,circle,minimum size=.5cm,inner sep=.1pt, scale =1.3] (7) at (-6*360/5 +90: 2.5cm) [scale=1.6]{$7$};
		\node[draw,circle,minimum size=.5cm,inner sep=.1pt, scale =1.3] (8) at (-7*360/5 +90: 2.5cm) [scale=1.6]{$8$};
		\node[draw,circle,minimum size=.5cm,inner sep=.1pt, scale =1.3] (9) at (-8*360/5 +90: 2.5cm) [scale=1.6]{$9$};
		\node[draw,circle,minimum size=.5cm,inner sep=.1pt, scale =1.3] (10) at (-9*360/5 +90: 2.5cm)[scale=1.6] {$10$};

		\coordinate (c1) at ($(7)!0.5!(10)$);
		\coordinate (c2) at ($(6)!0.5!(8)$);
		\coordinate (c3) at ($(7)!0.5!(9)$);
		\coordinate (c4) at ($(8)!0.5!(10)$);
		\coordinate (c5) at ($(6)!0.5!(9)$);
		
		\draw [line width=3.1pt,-,red1] (1) -- node[yshift = 2mm,xshift = 1mm, above, color=black, scale= 1.75] {$1$} ++(2) ;
		\draw [line width=3.1pt,-,red1] (1) -- (5);
		\draw [line width=3.1pt,-,red1] (1) -- (6);
		\draw [line width=3.1pt,-,red1] (2) -- (3);
		\draw [line width=3.1pt,-,red1] (2) -- (7);
		\draw [line width=3.1pt,-,red1] (3) -- (4);
		\draw [line width=3.1pt,-,red1] (3) -- (8);
		\draw [line width=3.1pt,-,red1] (4) -- (5);
		\draw [line width=3.1pt,-,red1] (4) -- (9);
		\draw [line width=3.1pt,-,red1] (5) -- (10);
		\draw [line width=3.1pt,-,blue1] (6) -- (8);
		\draw [line width=3.1pt,-,blue1] (6) -- (9);
		\draw [line width=3.1pt,-,blue1] (7) -- (9);
		\draw [line width=3.1pt,-,blue1] (7) -- node[below, color=black, scale= 1.75] {$\varepsilon$} ++(10);
		\draw [line width=3.1pt,-,blue1] (8) -- (10);
		
		\draw [line width=3.1pt,-,blue1] (c2) -- (6);
		\draw [line width=3.1pt,-,blue1] (c5) -- (9);

		\end{tikzpicture}
		\caption{A weighted Petersen graph where red edges have weight $1$ and blue edges have weight $\varepsilon$}
		\label{fig:PetersenWeigths}
	\end{subfigure}
	\caption{$\mathcal{P}$ is a consistent path system in the Petersen graph. It is non-metric, and yet $\Delta(\mathcal{P})=1$.}
\end{figure}
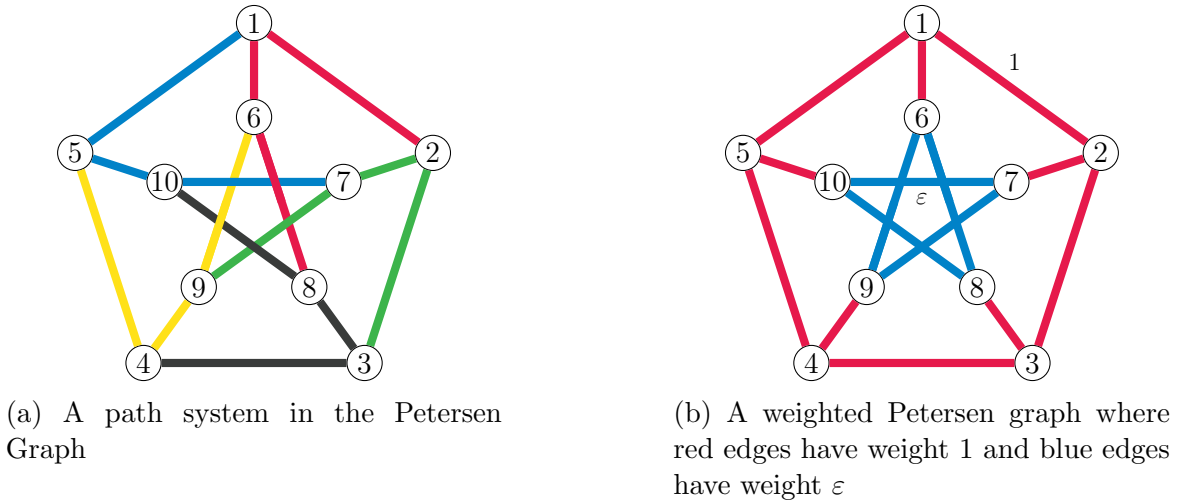

We remark that if $\mathcal{P}$ is metric then clearly $\Delta(\mathcal{P})=1$. 
However, the converse is not true as the following example shows. Consider
the following path system $\mathcal{P}$ on the Petersen graph $\Pi$,
that has three types of paths:
\begin{enumerate}
	\item For every pair of adjacent vertices $u,v$, the path $P_{u,v}$ is the edge $uv$
	\item The five colored $3$-hop paths in \cref{fig:nonMetPetersen} belong to $\mathcal{P}$
	\item For every other pair of vertices $x,y$, $P_{x,y}$ is the unique $2$-hop path connecting $x$ and $y$.  
\end{enumerate}
It is not difficult to show (see \cite{CL}) that $\mathcal{P}$ is consistent and not 
metric. Therefore, if $\mathcal{P}$ is $\alpha$-metric, then $\alpha >1$, but
in fact, $\mathcal{P}$ is $\alpha$-metric for every $\alpha >1$. 
Fix some $\varepsilon>0$ and let $\rho$ be the shortest path metric obtained
when in \cref{fig:PetersenWeigths} red edges have a weight of $1$
and blue edges a weight of $\varepsilon$. It is easily verified that
$\mathcal{P}$ is $(1+\frac{\varepsilon}{2})$-metric
with respect to this metric. 

Since $\Delta(\mathcal{P})$ measures how far $\mathcal{P}$ is from
being metric, the question that we are pursuing asks how large 
$\Delta(\mathcal{P})$ can be for an $n$-point space.
\subsection{Group Invariant Path Systems}\label{sec:gips}
Like many other combinatorial objects, it is interesting to
investigate path systems with certain {\em symmetries}.
Thus, we managed to construct in \cite{CL2} irreducible path systems
(for the definition of irreducibility, consult that paper)
on Paley graphs. That argument
depends crucially on particular symmetries of these path systems.
Here, we extend this idea and introduce {\em group invariant} path systems. 
{\it The theory developed here is used subsequently (\cref{sec:lower}) 
to construct consistent path systems of large $\Delta(\mathcal{P})$.} 
Let $\Gamma$ be a Cayley graph of a group $G$. For a simple
path $P=(h_1,\dots,h_k)$ in $\Gamma$ and $g\in G$ we define 
\[gP \coloneqq (gh_1,\dots,gh_k).\]
\begin{definition}
A path system $\mathcal{P}$ in a Cayley graph of a group $G$ is
said to be {\em $G$-invariant} if for all $g,x,y\in G$ there holds
    \[gP_{x,y} = P_{gx,gy}\]
\end{definition}
Let $\Gamma$ be a Cayley graph of a group $G$. Let us elucidate the notion of 
$G$-invariance for a path system on $\Gamma$. Consider the mapping $G\mapsto G$
where $x\mapsto gx$ for some given $g\in G$. 
This is clearly an automorphism of $\Gamma$. A path system 
$\mathcal{P}$ in $\Gamma$ is $G$-invariant iff the same maps induce
an automorphism of $\mathcal{P}$ as well. 
Consequently, a $G$-invariant path system $\mathcal{P}$ in a Cayley graph of $G$ is
uniquely determined by the paths $P_{e,x}$, where $e$ is $G$'s identity element
and $x\neq e$ ranges over all other elements of $G$. Say that $P_{e,x}= 
(e, g_1, g_1\cdot g_2, g_1\cdot g_2\cdot g_3, \ldots, g_1 \cdot g_2\ldots \cdot g_r)$,
where $g_1, g_2,\ldots, g_r$ are generators of the Cayley graph, and
$x=g_1 \cdot g_2\ldots \cdot g_r$. We denote the word $g_1, g_2, \ldots, g_r$ by $w_x$.\\
As we show next, a $G$-invariant path system is conveniently defined by 
a collection of words that satisfy certain consistency conditions.
\begin{proposition}\label[proposition]{prop:words}
Let $G$ be a group and $S^{-1}=S\subset G$ a generating set. 
For every $x\in G$ pick a presentation
$x = \gamma_1\cdot \gamma_2\cdots \gamma_t$ where 
$\gamma_1,\dots, \gamma_t\in S$, and let $w_x$
be the word $w_x = \gamma_{1},\dots, \gamma_t$.
Suppose this collection of words $\{w_x:x\in G\}$ satisfies: 
\begin{enumerate}
\item If $w_x= \gamma_{1},\dots, \gamma_t$ then 
$w_{x^{-1}}=\gamma_t^{-1},\dots, \gamma_1^{-1}$
\item If $w_x= \gamma_{1},\dots, \gamma_t$, and if 
$y=\gamma_{i}\cdots \gamma_j$
for some $1\le i \le j \le t$, then  $w_{y}=\gamma_{i},\dots, \gamma_j$.
\end{enumerate}
Then the words $\{w_x:x\in G\}$ uniquely define a $G$-invariant consistent path system by setting 
\[P_{e,x} = 
(e,\gamma_1,\gamma_1\cdot \gamma_2,\dots, \gamma_1\cdot \gamma_2\cdots \gamma_t),
\]
where $w_x = \gamma_{1},\dots, \gamma_t$. 
\end{proposition}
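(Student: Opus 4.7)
The plan is to build the path system by $G$-invariance: for every $x,y\in G$ declare
$P_{x,y} \coloneqq x\cdot P_{e, x^{-1}y}$, where $P_{e, z} = (e,\gamma_1, \gamma_1\gamma_2,\dots,\gamma_1\cdots\gamma_t)$ for $w_z = \gamma_1,\dots,\gamma_t$. Since consecutive vertices differ by a generator from $S$, each $P_{x,y}$ is a walk in the Cayley graph $\Gamma$. By construction, $g\cdot P_{x,y} = (gx)\cdot P_{e,(gx)^{-1}(gy)} = P_{gx,gy}$, so $G$-invariance holds for free; moreover, any $G$-invariant path system is determined by its paths out of $e$, which gives the uniqueness clause of the proposition.

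Next I would verify the two axioms of a consistent path system. \emph{Symmetry} ($P_{x,y} = P_{y,x}$) reduces, after left-multiplying by $x^{-1}$, to showing that the reversal of $P_{e,z}$ equals $P_{e,z^{-1}}$ for every $z\in G$; this is exactly condition (1), since reversing $(e,\gamma_1,\gamma_1\gamma_2,\dots,\gamma_1\cdots\gamma_t)$ and factoring out $z = \gamma_1\cdots\gamma_t$ produces $(e,\gamma_t^{-1},\gamma_t^{-1}\gamma_{t-1}^{-1},\dots,\gamma_t^{-1}\cdots\gamma_1^{-1})$, which is precisely $P_{e,z^{-1}}$ once condition (1) is applied. \emph{Closure under subpaths} similarly reduces, using $G$-invariance, to the case of a subpath of $P_{e,x}$: for $0\le i\le j\le t$, the subpath from $\gamma_1\cdots\gamma_i$ to $\gamma_1\cdots\gamma_j$ equals $(\gamma_1\cdots\gamma_i)\cdot P_{e,\,\gamma_{i+1}\cdots\gamma_j}$ provided we can identify $w_y$ with $\gamma_{i+1},\dots,\gamma_j$ for $y = \gamma_{i+1}\cdots\gamma_j$ — which is precisely condition (2).

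Finally, I would check that each $P_{e,x}$ is a \emph{simple} path, so that the objects we defined really are paths and the definition $P_{u,v} = P_{v,u}$ between distinct endpoints is unambiguous. If $\gamma_1\cdots\gamma_i = \gamma_1\cdots\gamma_j$ for some $i<j$ inside $P_{e,x}$, then $\gamma_{i+1}\cdots\gamma_j = e$, and condition (2) would force $w_e = \gamma_{i+1},\dots,\gamma_j$, contradicting the fact that $w_e$ is the empty word (which we must take as the presentation of the identity). Thus $P_{e,x}$ has no repeated vertices, and translating preserves this property.

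The only real subtlety — and the step I expect to require the most care — is the consistency argument: one must be vigilant about reducing an arbitrary subpath of an arbitrary $P_{x,y}$ to the standard form $P_{e,z}$ via left translation before condition (2) can be invoked. Once that reduction is made explicit, both axioms fall out directly from (1) and (2), and $G$-invariance is built into the definition.
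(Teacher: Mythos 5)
Your proposal is correct and follows essentially the same route as the paper: define $P_{x,y}=xP_{e,x^{-1}y}$, get $G$-invariance for free, use condition (1) for symmetry and condition (2) for closure under subpaths. Your extra check that each $P_{e,x}$ is simple (via $w_e$ being the empty word) is a worthwhile detail the paper leaves implicit.
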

\begin{proof}
We define $\mathcal{P}$ as follows: for all $x,y\in G$ we set 
\[P_{x,y} \coloneqq x P_{e, x^{-1}y}.\]
Let us verify that with this definition $P_{x,y} = P_{y,x}$.
If $w_{x^{-1}y} = g_1,\dots, g_r$ then 
\[P_{x,y} = (x,x\cdot g_1,x\cdot g_1\cdot g_2,\dots, x\cdot g_1\cdot g_2\cdots g_r=y).\]
Note that property $1$ yields $w_{y^{-1}x} = g_r^{-1},\dots, g_1^{-1}$ and
\[P_{y,x} = (y,y\cdot g_r^{-1},y\cdot g_r^{-1}\cdot g_{r-1}^{-1},\dots, y\cdot g_r^{-1}\cdot g_{r-1}^{-1}\cdots g_1^{-1}=x).\]
Indeed, these two paths coincide. 
Notice that the same edge is labeled $g$ resp.\ $g^{-1}$
according to the orientation at which it is traversed. \\
Next we show that this system is consistent. We need to show that if
$P_{x,y}$ and $P_{u,v}$ have two common vertices, say $a$ and $b$, 
then the $(ab)$-subpaths of these two paths coincide. 
Let us write $w_{x^{-1}y}=g_1,\dots, g_r$ and $w_{u^{-1}v}=h_1,\dots, h_s$ and 
\[P_{x,y}= (x,xg_1,xg_1\cdot g_2,\dots, xg_1\cdot g_2\cdots g_r) \hspace{15mm}P_{u,v}= (u,uh_1,uh_1\cdot h_2,\dots, uh_1\cdot h_2\cdots h_s).\]
We may assume $a=xg_1\cdots g_i = uh_1\cdots h_{i'}$ and  
$b=xg_1\cdots g_j = uh_1\cdots h_{j'}$ with $i<j$ and $i'<j'$. 
(If, say, $j<i$, we work instead with $w_{y^{-1}x}$ .) Observe that
\[g_{i+1}\cdots g_{j} = a^{-1}b = h_{i'+1}\cdots h_{j'}.\]
Since $g_{i+1},\dots, g_{j}$ and $ h_{i'+1},\dots, h_{j'}$ are subwords of 
$w_{x^{-1}y}$ and $w_{u^{-1}v}$ by property 2 
\[\text{the words~}g_{i+1},\dots, g_{j} \text{~and~} h_{i'+1},\dots, h_{j'}\text{~coincide},\]
and hence these two $(ab)$-subpaths coincide. \\
That this system is $G$-invariant follows from the fact that $(x,y)=(gu,gv)$ for some $g\in G$ iff $x^{-1}y=u^{-1}v$. In this case, 
    \[P_{x,y} =xP_{e,x^{-1}y}=gu P_{e,u^{-1}v}= gP_{u,v}.\]
\end{proof}

A metric $\rho$ on the elements of a group $G$
is called $G$-invariant if $\rho(gu,gv) = \rho(u,v)$ for all $g,u,v\in G$.
The following proposition will be used in the sequel.

\begin{proposition}\label[proposition]{prop:invariant_metric}
Let $\mathcal{P}$ be an $\alpha$-metric $G$-invariant consistent path system.
Then there exists a $G$-invariant metric $\rho$ such that
$\rho(P_{u,v}) \le \alpha \rho(u,v)$ for all $u,v\in G$.
\end{proposition}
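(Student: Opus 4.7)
The natural approach is \emph{symmetrization by averaging}. Since $\mathcal{P}$ is $\alpha$-metric, there exists some (not necessarily $G$-invariant) metric $\rho'$ on the vertex set $G$ satisfying $\rho'(u,v) \le \rho'(P_{u,v}) \le \alpha \rho'(u,v)$ for every $u,v \in G$. I would define a new metric by averaging over the group action:
\[
\rho(u,v) \;=\; \frac{1}{|G|} \sum_{g \in G} \rho'(gu, gv).
\]
(The paper is in the finite setting, so this sum is well-defined; otherwise one would replace $\frac{1}{|G|}\sum$ by an invariant mean.)

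I would first check that $\rho$ is genuinely a metric on $G$. Non-negativity, symmetry, and the triangle inequality are all preserved under non-negative linear combinations of metrics, and $\rho(u,v) > 0$ when $u \ne v$ because each $\rho'(gu, gv) > 0$. Next I would verify $G$-invariance: for any $h \in G$, the substitution $g' = gh$ gives
\[
\rho(hu, hv) \;=\; \frac{1}{|G|}\sum_{g \in G} \rho'(ghu, ghv) \;=\; \frac{1}{|G|}\sum_{g' \in G} \rho'(g'u, g'v) \;=\; \rho(u,v).
\]

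The substantive step is to show that $\rho$ still witnesses $\alpha$-metricity. Writing $P_{u,v} = (x_0, x_1, \ldots, x_k)$, the key identity is
\[
\rho(P_{u,v}) \;=\; \sum_{i=1}^{k} \rho(x_{i-1}, x_i) \;=\; \frac{1}{|G|}\sum_{g \in G} \sum_{i=1}^{k} \rho'(gx_{i-1}, gx_i) \;=\; \frac{1}{|G|}\sum_{g \in G} \rho'(gP_{u,v}).
\]
Here I exploit the $G$-invariance of $\mathcal{P}$ itself: by definition $gP_{u,v} = P_{gu, gv}$, so the inner term equals $\rho'(P_{gu,gv}) \le \alpha\, \rho'(gu,gv)$. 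Summing and dividing by $|G|$ yields $\rho(P_{u,v}) \le \alpha\, \rho(u,v)$, as required.

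I don't expect any real obstacle here; everything is forced once one writes down the averaged metric. The only moving part is the need to use the $G$-invariance of $\mathcal{P}$ at exactly the right moment, to convert the translated path $gP_{u,v}$ into an actual path $P_{gu,gv}$ of the system so that the $\alpha$-metric inequality for $\rho'$ can be applied.
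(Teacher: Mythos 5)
Your proposal is correct and coincides with the paper's proof: both symmetrize the witnessing metric by summing (resp.\ averaging, which differs only by the harmless normalization $1/|G|$) over the group action, and both invoke the $G$-invariance of $\mathcal{P}$ to rewrite $\rho'(gP_{u,v})$ as $\rho'(P_{gu,gv})$ before applying the $\alpha$-metric inequality. No substantive difference.
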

\begin{proof}
Since $\mathcal{P}$ is $\alpha$-metric, 
there exists some metric $\tilde{\rho}$ such 
that $\tilde{\rho}(P_{u,v}) \le \alpha \tilde{\rho}(u,v)$ for all $u,v\in G$.    
It is a common practice 
with such problems to act on this inequality with some $g\in G$,
and sum over $g$. Thus,
we define the metric $\rho$ on $G$ via 
\[\rho(u,v) \coloneqq \sum_{g\in G} \tilde{\rho}(gu,gv).\]
This metric $\rho$ is clearly $G$-invariant. 
Moreover, for $P_{u,v} = (u=x_0,\dots, x_{r}=v)$ there holds  
\[\rho(P_{u,v}) = \sum_{i=1}^{r}\rho(x_{i-1},x_{i}) = \sum_{i=1}^{r}\sum_{g\in G}\tilde{\rho}(gx_{i-1},gx_{i}) =\sum_{g\in G}\sum_{i=1}^{r}\tilde{\rho}(gx_{i-1},gx_{i}) = \sum_{g\in G} \tilde{\rho}(P_{gu,gv}),\]
since $\mathcal{P}$ is $G$-invariant.
In particular, this implies that for all $u,v\in G$
\begin{equation*}
\rho(P_{u,v}) =\sum_{g\in G} \tilde{\rho}(P_{gu,gv}) \le \sum_{g\in G} 
\alpha\cdot \tilde{\rho}(gu,gv) = \alpha\cdot \rho(u,v).
\end{equation*}
\end{proof}

\section{Upper Bounds}\label{sec:upper}

In this section we prove an upper bound for $\Delta(\mathcal{P})$. Before stating this result we briefly discuss the trivial upper bound  mentioned in \cref{sec:background}:
every $n$-vertex consistent path system $\mathcal{P}$ satisfies $\Delta(\mathcal{P}) \le n-1$. 
Actually, this simple argument
applies as well to {\em all} path systems, consistent or not, and as
the following construction shows, in this generality the bound is tight.
Let $C_n$ be the $n$-vertex cycle $(x_1,x_2\cdots x_{n},x_{1})$. For every pair of adjacent vertices $x_{i},x_{i+1}$ we set $P_{x_{i}, x_{i+1}}$ to be the ``long'' $(x_{i}x_{i+1})$-path in $C_n$, i.e. the path in $C_n$ obtained by deleting the edge $x_{i}x_{i+1}$. For every other pair of vertices we may choose the path arbitrarily. Clearly, this system is inconsistent. Now suppose that for some $\alpha\ge 1$ and metric $\rho$ we have 
$\rho(P_{x_{i},x_{i+1}}) \le \alpha \rho(x_{i},x_{i+1})$ for all $1\le i \le n$. If $i_0$ is an
index such that  $\rho(x_{i_0},x_{i_0+1})$ is smallest, then 
\[(n-1) \rho(x_{i_0},x_{i_0+1}) \le \rho(P_{x_{i_0},x_{i_0+1}}) \le \alpha \rho(x_{i_0},x_{i_0+1}),\]
implying $n-1\le \alpha$. \\
The use of inconsistency was crucial here, since, as shown in \cite{CL}, every
consistent path systems in $C_n$ is strictly metric. So to improve this upper bound, we must explicitly use 
the consistency of the path system at hand.

With this in mind we have the following upper bound:
\begin{theorem}\label[theorem]{thm:upperbound}
Every consistent path system $\mathcal{P}$ on $n$ vertices satisfies
\[\Delta(\mathcal{P}) \le 2\sqrt n.\]
\end{theorem}

Our proof actually proves much more. We find a system
of edge weights for which every path
in $\mathcal{P}$ weighs at most
$2\sqrt{n}$, while every path in $G$ that is
not in $\mathcal{P}$ weighs at least $1$.

\begin{lemma}\label[lemma]{lem:basic}
For every consistent $n$-vertex path system 
$\mathcal{P}$ in a graph $G=(V,E)$, 
there exists a weight function $w:E\to \mathbb{R}_{>0}$ for which
\begin{enumerate}[label=(\roman*)]
\item $w(P) \le 2\sqrt n$ for all $P\in \mathcal{P}$
\item $w(P) \ge 1$ for every path
$P$ in $G$ with $P\notin \mathcal{P}$
\end{enumerate}
\end{lemma}

Clearly, lemma \ref{lem:basic} implies \cref{thm:upperbound}. 
In our proof, the weight function $w$ takes only two values: 
$\frac{1}{n}$ and $1$. Let $F$
be the graph whose edge set is precisely the edges on which $w$ equals
$\frac{1}{n}$, and let $P$ be a path contained in $F$.
Clearly, $w(P) < 1$, so by condition (ii) of the lemma,
$P$ must belong to $\mathcal{P}$. But if every path that is contained in $F$
belongs to $\mathcal{P}$, since $\mathcal{P}$ 
is consistent, the edge set $F$ is acyclic, i.e.,
the edge-set of a sub-forest of $G$. Moreover, by condition (i) every path in $\mathcal{P}$ can have at most $2\sqrt n$ edges not in $F$. It is not difficult to see that these conditions are not only necessary but also sufficient for $w:E\to \{\frac{1}{n},1\}$ to satisfy the requirements of \cref{lem:basic}.
What we need then is the following lemma. Recall a forest is said be {\em linear} if every connected component is a path
\begin{lemma}\label[lemma]{lem:forest}
If $\mathcal{P}$ is a consistent path system in an $n$-vertex graph $G=(V,E)$,
then $G$ has a linear sub-forest $F$ for which
\begin{enumerate}[label=(\roman*)]
\item Every path included in $F$ is a member of $\mathcal{P}$.
\item Every path in $\mathcal{P}$ has fewer than $2\sqrt n$ edges outside of $F$.
\end{enumerate}
\end{lemma}
We remark that the fact that $F$ is a linear forest rather than just forest is not essential for the proof of \cref{lem:basic}, 
but we feel that this fact is interesting in its own right. We can rephrase \cref{lem:forest} as follows: for any consistent path system $\mathcal{P}$ we may find a subset $S\subset \mathcal{P}$ of vertex disjoint subpaths such that the paths in $\mathcal{P}$ are ``mostly'' covered by the paths in $S$, with at most $2\sqrt n$ edges not covered on each path.
We now prove lemma \ref{lem:forest} and with that 
establish \cref{thm:upperbound}.
\begin{proof}
We set $D\coloneqq 2\sqrt{n}$. 
We construct $F$ sequentially, starting with $F_0$, an empty set of edges. 
At each stage $i$ we 
consider a path $P_i\in \mathcal{P}$ such that 
$|E(P_i)\setminus E(F_{i-1})| \ge D$. 
If no such path exists then the process terminates. Otherwise, let
\begin{equation}\label{eq:what_remains}
Q_i:=P_i \setminus V(P_i\cap F_i). 
\end{equation}
Note that $Q_i$ is obtained from $P_i$ by removing
from $P_i$ the {\em vertices, not the edges,} that it shares with $F_i$. 
Clearly, $Q_i$ comprises a collection of vertex disjoint paths 
that are also disjoint from $F_i$,
and we let $F_{i+1}:=F_i\cup E(Q_i)$. By construction, 
and by consistency, $F_{i+1}$ is still the disjoint union of paths in 
$\mathcal{P}$. We further observe that

\begin{equation}\label{eq:growth}
|F_{i+1}|\ge |F_i|+D-2i.
\end{equation}

To see this, consider the intersection of two paths $P, P'$ in a consistent path system.
This is a path that is a (possibly empty) subpath of both $P$ and $P'$.
In our case, the term $V(P_i\cap F_i)$ in equation (\ref{eq:what_remains}), records the
intersection of $P_i$ with various $P_j$ with $j<i$. In particular, an edge $e\in P_i$ can only fail to appear in $F_{i}$ if $e$ is incident to, but not contained in, some path $P_{i}\cap P_{j}$, with $j<i$. 
Therefore, every $j<i$ creates
a loss of at most two edges of $P_i$, namely the one 
immediately prior and immediately
following the common subpath of $P_i$ and $P_j$. 

It remains to show that this process
terminates. Indeed, we show that it
terminates within $D/2$ steps.
There holds $|F_{\nu}|\le n-1$ for every 
index $\nu$, since $F_{\nu}$ is a forest.
Summing equation (\ref{eq:growth})
over $i\le D/2$, yields:

\[D^2/4\le n-1,\]

\noindent
which is a contradiction, since we took $D=2\sqrt n$.

\end{proof}

\section{Lower Bounds}\label{sec:lower}

In this section we show that the upper bound from
the previous section is tight, up to polylogarithmic factors. In particular, we prove the following theorem:
\begin{theorem}\label[theorem]{thm:main}
There exist arbitrarily large order-$n$ groups $G_n$ along with
corresponding Cayley graphs $\Gamma_n$ and consistent $\Gamma_n$-invariant
path systems $\mathcal{P}_n$ 
with $\Delta(\mathcal{P}_n) \ge \tilde{\Omega}(\sqrt{n})$.
\end{theorem}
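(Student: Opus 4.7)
The plan is to use Propositions \ref{prop:words} and \ref{prop:invariant_metric} to reduce the theorem to a clean combinatorial inequality on a word system, and then to prove that inequality via an explicit construction combined with an averaging/duality argument. By Proposition \ref{prop:words} a $G$-invariant consistent path system is the same data as a family of words $\{w_x\}_{x\in G}$ closed under reverse-and-inversion and under taking subwords; by Proposition \ref{prop:invariant_metric} it suffices to test against $G$-invariant metrics, equivalently against non-trivial symmetric sub-additive functions $f:G\to\mathbb{R}_{\ge 0}$. Writing $w_x=\gamma_1^{(x)}\cdots\gamma_{t_x}^{(x)}$, the quantity to bound becomes
\[
\Delta(\mathcal{P}_n)\;=\;\inf_f\,\sup_{x\ne e}\,\frac{\sum_{i=1}^{t_x} f(\gamma_i^{(x)})}{f(x)}.
\]

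For the construction I would take $G_n$ to be an algebraically rich group of order $n$ (a cyclic group, a product such as $\mathbb{F}_p\times\mathbb{F}_p$, or a Heisenberg-type group over $\mathbb{F}_p$) equipped with a generating set $S_n$ of size roughly $n^{1/2}$, and build the words $\{w_x\}$ from a single structural rule that makes subword-closure automatic — for instance, mixed-radix digit expansions, traversals of lines in an affine plane, or orbits of a fixed cyclic subgroup — then symmetrize to enforce property 1 of Proposition \ref{prop:words} by defining $w_x$ on a fundamental domain of $x\mapsto x^{-1}$ and extending via reversal-and-inversion. The design goal is that a positive fraction of the words have length $n^{1/2-o(1)}$ while their endpoints admit many alternative routes through other generators; this tension is what will force every semi-norm to produce a large ratio.

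To prove the lower bound I would cast the constraints $\sum_i f(\gamma_i^{(x)})\le\alpha f(x)$ as a linear program and pass to its dual. Summing the constraints with well-chosen non-negative multipliers $\lambda_x$ yields
\[
\sum_{s\in S_n} c_\lambda(s)\,f(s)\;\le\;\alpha\,\sum_{x\ne e}\lambda_x\, f(x),
\]
where $c_\lambda(s)=\sum_x\lambda_x\,\#\{i:\gamma_i^{(x)}=s\}$. Using sub-additivity to further bound each $f(x)$ on the right by a sum of $f(s)$'s and choosing $\lambda$ so that the coefficient profile on the right matches $c_\lambda$ on the left gives $\alpha\ge A/B$ for explicit combinatorial quantities $A,B$ of the word system, which by design satisfy $A/B\ge n^{1/2-o(1)}$.

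The main obstacle is the construction itself. Subword-closure is strongly self-propagating: once a long $w_x$ is specified, every substring rigidly determines $w_y$ for the element $y$ that substring sums to, and those in turn constrain still other words; arbitrary attempts to freely stipulate long meandering paths collapse quickly. Reconciling property 2 with the reversal-and-inversion property 1, while at the same time producing a word system rich enough that every semi-norm on $G_n$ incurs a ratio of $n^{1/2-o(1)}$, is where the bulk of the work will go. The duality step is routine once the combinatorial profile of the words is in hand; producing words with that profile via a concrete algebraic recipe is the real content of the theorem.
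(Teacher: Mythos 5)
Your reduction to $G$-invariant metrics via \cref{prop:invariant_metric}, your reformulation in terms of word systems via \cref{prop:words}, and your observation that the lower bound amounts to a dual certificate for a family of linear constraints all match the paper's strategy. But the proposal stops exactly where the proof has to begin: you yourself name the construction as ``the real content of the theorem'' and then offer only a menu of candidate recipes (digit expansions, affine lines, cyclic orbits) without committing to one, without verifying the two consistency conditions of \cref{prop:words}, and without extracting a quantitative ratio. As written there is no construction, hence no proof; this is a genuine gap, not a stylistic difference.

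For comparison, here is how the paper closes that gap, and why the obstacle you flag (subword closure propagating uncontrollably) is far milder than you fear. Take $G=Z_n$ with $n$ prime, set $m=\sqrt n/\log^2 n$, and let $X$ be a \emph{random} set of $k=O(\log n)$ elements. By \cref{lem:expander}, with high probability $X$ generates $Z_n$ with diameter $d=O(\log n)$, and a first-moment bound shows that with high probability no two distinct $a,b\in X$ satisfy $ia=jb$ with $1\le|i|,|j|\le m$. The only long words are $w_{\ell a}=a,\dots,a$ ($\ell$ times) for $a\in X$ and $0\le\ell\le m$; every other element $y$ gets the one-letter word $w_y=y$, which is legitimate because the Cayley generating set $S$ is taken to be $X$ together with \emph{all} elements that are not small multiples of members of $X$. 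Subword closure is then automatic, since a subword of $a,\dots,a$ is again of that form and the collision-freeness makes these words well defined. The lower bound is your duality argument in its most degenerate form: for each $a\in X$ one has $m\,\rho(e,a)=\rho(P_{e,ma})\le\alpha\,\rho(e,ma)\le\alpha d\max_{h\in X}\rho(e,h)$, and summing over $a\in X$ gives $\alpha\ge m/(d|X|)=n^{1/2-o(1)}$. Note also that the parameters you envision do not match: the set that matters has size $O(\log n)$, not $n^{1/2}$ (the Cayley generating set is in fact almost all of $G$), and only $O(\sqrt n/\log n)$ elements carry long words, a vanishing fraction rather than a positive one. The missing idea you would need to import is precisely this: make the system long only along the $m$-step geodesics of a few randomly chosen generators, and trivial everywhere else.
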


We begin with a lemma that provides a general framework 
that yields consistent group-invariant path systems with large $\Delta$.
\begin{lemma}\label[lemma]{lem:cayley_metric}
Let $G$ be a finite group, and $X=X^{-1} \subset G$ a symmetric set 
such that for some positive integers $d,m$ there holds:
\begin{enumerate}
\item 
The order in $G$ of every element of $X$ is bigger than $2m$.
\item\label{item:2}
The equality $g^i=h^j$ with $g,h \in X$, $g\neq h^{\pm 1}$, and $-m\le i, j\le m$, holds only for $i=j=0$.
\item For every $g\in X$ the group element $g^m$ can be expressed as the 
product of at most $d$ elements of $X$.\label{item3}
\end{enumerate}
Then there is some $S^{-1}=S\subset G$ and a $G$-invariant consistent path system $\mathcal{P}$
in the Cayley Graph $\Gamma(G,S)$ for which $\Delta(\mathcal{P}) \ge \frac{m}{d|X|}$.
\end{lemma}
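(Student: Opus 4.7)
The plan is to define, via Proposition \ref{prop:words}, a $G$-invariant consistent path system $\mathcal{P}$ that contains the ``monogenic'' power paths $(e,g,g^2,\ldots,g^m)$ for every $g\in X$, and then to exploit the tension between the length of such a path and the shortcut guaranteed by condition (3) to force a large distortion.

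For the construction, set $X_m := \{g^k : g\in X,\ -m\le k\le m\}$ and take $S := X \cup (G\setminus X_m)$, symmetrized. I would assign words over $S$ by setting $w_{g^k}$ to be $g,g,\ldots,g$ (length $k$) for $g\in X$ and $1\le k\le m$ (with negative powers given by reversal/inversion), and $w_x := x$ as a single letter in $S$ for every $x \in G\setminus X_m$. Conditions (1) and (2) of the lemma are exactly what is needed to verify the two axioms of Proposition \ref{prop:words}: (1) makes each monogenic path simple, and (2) prevents the distinct monogenic branches through $e$ from overlapping except at $e$, so there is no conflict among the power words. Property (1) of the proposition is built in, while property (2) holds because every nontrivial subword of a monogenic block $g,\ldots,g$ is a shorter monogenic block representing a shorter power, and single-letter words have only trivial subwords. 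This yields the desired consistent $G$-invariant $\mathcal{P}$ in which $P_{e,g^m} = (e,g,g^2,\ldots,g^m)$ for every $g\in X$.

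For the lower bound, suppose $\mathcal{P}$ is $\alpha$-metric. Proposition \ref{prop:invariant_metric} yields a $G$-invariant witness metric $\rho$ satisfying $\rho(P_{u,v})\le \alpha\,\rho(u,v)$. Using $G$-invariance of $\rho$,
\[
\rho(P_{e,g^m}) \;=\; \sum_{i=1}^{m}\rho(g^{i-1},g^i) \;=\; m\cdot \rho(e,g),
\]
so $m\,\rho(e,g) \le \alpha\,\rho(e,g^m)$. On the other hand, condition (3) produces $h_1,\ldots,h_\ell\in X$ with $\ell\le d$ and $g^m = h_1\cdots h_\ell$; the triangle inequality along this factorization, again combined with $G$-invariance, gives
\[
\rho(e,g^m) \;\le\; \sum_{i=1}^{\ell}\rho(e,h_i) \;\le\; d\cdot M, \qquad M:=\max_{h\in X}\rho(e,h).
\]
Combining and choosing $g\in X$ that attains $M$ yields $mM\le \alpha dM$, i.e.\ $\alpha\ge m/d$, which in particular exceeds $m/(d|X|)$.

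The only nontrivial part is the construction step: verifying that the prescribed monogenic paths genuinely extend to a fully consistent, $G$-invariant path system. This is precisely where the combinatorial hypotheses of the lemma are used, and the device of admitting every non-power element of $G$ as its own single-letter generator of $\Gamma(G,S)$ sidesteps the most delicate bookkeeping while preserving the crucial structure of the power paths.
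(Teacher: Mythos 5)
Your proposal is correct and follows essentially the same route as the paper: the same set $S = X \cup (G\setminus\{g^k : g\in X,\ |k|\le m\})$, the same word assignment verified via Proposition \ref{prop:words} using conditions (1) and (2), and the same pair of inequalities $\rho(P_{e,g^m}) = m\,\rho(e,g)$ versus $\rho(e,g^m)\le d\max_{h\in X}\rho(e,h)$ after invoking Proposition \ref{prop:invariant_metric}. The one divergence is your final step: where the paper sums the inequality over all $g\in X$ (picking up the factor $|X|$), you evaluate it at the $g$ maximizing $\rho(e,\cdot)$ on $X$, which is valid (that maximum is positive since every $g\in X$ has order $>2m$) and in fact yields the sharper bound $\Delta(\mathcal{P})\ge m/d$, which of course implies the stated $m/(d|X|)$.
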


\begin{proof}
The set $S^{-1}=S$ is defined as follows:
\[S = X \cup \left(G\setminus \bigcup_{g\in X} \{g^{i} : -m\le i \le m\}\right).\]
In words, the set $S$ consists of all the elements of $X$ as well as every 
element of $G$ that cannot be expressed as a power
smaller than $m$ of some $g\in X$.\\
To construct the path system we follow the pattern suggested by \cref{prop:words} and
specify a collection of words $\{w_g: g\in G\}$ that satisfy
the consistency condition of that proposition. 
If $y= g^{\ell}$ for $g\in X$ and $0\le\ell \le m$ we set 
$w_y = g,\dots,g$, where $g$ appears $l$ times. 
Otherwise, if $y\in G$ cannot be expressed as a small power of some $g\in X$ we set 
$w_y=y$. Notice that by conditions 1 and 2 of the present lemma,
the words $\{w_y:y\in G\}$ are well defined 
and satisfy the consistency condition of \cref{prop:words}. 
These words then generate a $G$-invariant consistent path system where 
\[P_{e,y} = (e,g,\dots, g^{\ell}),\]
whenever $y= g^{\ell}$ for some $g\in X$ and $-m\le \ell \le m$.

We turn to derive a lower bound on $\Delta(\mathcal{P})$. 
Let $\alpha >0$ and $\rho$ a metric on $G$ such that 
\[\rho(P_{u,v}) \le \alpha\cdot \rho(u,v)\]
for all $u,v\in G$, $P_{u,v}\in \mathcal{P}$. By \cref{prop:invariant_metric} we may assume $\rho$ is $G$-invariant. 
Consider the paths $P_{e,g^m}$ over $g\in X$. 
Due to the $G$-invariance of $\rho$, we have 
\begin{equation}\label{eq:colinear}
\rho(P_{e,g^m}) = 
\sum_{j=1}^{m}\rho(P_{g^{j-1},g^j}) = m\cdot \rho(e,g).
\end{equation}
On the other hand, by assumption \ref{item3} of \cref{lem:cayley_metric},
we can write $g^m = h_1\cdots h_s$ where $h_1,\ldots,h_s$ belong to $X$ and $s\le d$.
In particular, the triangle inequality along the path 
$(e,h_1,h_1h_2,\dots,h_1\cdots h_s)$ yields
\begin{equation}\label{eq:triangel_ineq}
\rho(e,g^m)  \le \sum_{j=1}^{s}\rho(h_1\cdots h_{j-1},h_1\cdots h_{j}) =\sum_{j=1}^{s}\rho(e,h_{j}) \le d\cdot \max_{h\in X} \rho(e,h)
\end{equation}
Putting together \eqref{eq:colinear} and \eqref{eq:triangel_ineq} 
we get that for all $g\in X$
\[ m\cdot \rho(e,g) = 
\rho(P_{e,g^m}) \le \alpha\cdot \rho(e,g^m)\le 
\alpha d\cdot\max_{h\in X} \rho(e,h).\]
Summing over all $g\in X$ yields 
\[ m \sum_{g\in X}\rho(e,g) \le \alpha d|X|\cdot \max_{h\in G}\rho(e,h).\]
Consequently \[\frac{m}{d|X|} \le \alpha,\]
as claimed.
\end{proof}
We remark that if we restrict ourselves to the generating set $X$ then the lemma still holds for the {\em partial} consistent path system $\title{\mathcal{P}} = \{(e,g,\dots, g^\ell) : g\in X, \ell \le m\}$ in $\Gamma(G,X)$. Extending this set of generators allows us to extend this partial system into a full system on some larger Cayley graph.

As we show next, the cyclic group $Z_n$ of prime order $n$ 
has a subset $X$ that satisfies the conditions of 
\cref{lem:cayley_metric}. Since this group is abelian,
we use additive rather than multiplicative notation.
\begin{lemma}\label[lemma]{lem:cyclic}
Let $n$ be prime and $m:=\frac{\sqrt n}{\log^2 n}$. 
There exists $X\subset Z_n$, with $|X| \le O(\log n)$ such that
\begin{enumerate}
\item\label{short:gen} 
Every element in $Z_n$ can be expressed as the sum of $O(\log n)$ elements of $X$.
\item\label{no:repeats}
If $ia = jb$ for some $a\neq b\in X$ and $-m\le i,j \le m$, then $i=j=0$.
\end{enumerate}
\end{lemma}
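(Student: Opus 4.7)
The approach is probabilistic. Let $k=\lceil C\log n\rceil$ with $C$ a sufficiently large absolute constant. I pick $a_1,\ldots,a_k\in Z_n^{*}$ independently and uniformly, and set $X=\{a_1,\ldots,a_k\}$. Easy union bounds show that, with probability $1-O(\log^{2}n/n)$, the $a_i$ are pairwise distinct and no two are negatives of one another; I condition on this event. The task is then to show that the two stated properties hold with probability $1-o(1)$ for the resulting random $X$.

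For property \ref{no:repeats}, fix a pair $a\neq b\in X$. For every nonzero $(i,j)\in\{-m,\ldots,m\}^{2}$, the case $j=0$ forces $i=0$ (as $a\in Z_n^{*}$), so I may assume $j\neq 0$; then $ia\equiv jb\pmod n$ pins $b\equiv ij^{-1}a\pmod n$, and since $b$ is uniform in $Z_n^{*}$ independent of $a$, this occurs with probability at most $1/(n-1)$. Summing over the $O(m^{2})$ choices of $(i,j)$ and the $\binom{k}{2}=O(\log^{2}n)$ pairs in $X$ gives total failure probability $O(m^{2}\log^{2}n/n)=O(1/\log^{2}n)=o(1)$, using $m=\sqrt{n}/\log^{2}n$.

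For property \ref{short:gen}, let $\omega=e^{2\pi\mathrm{i}/n}$ and let $\mu$ be the distribution on $Z_n$ of $\sum_{j=1}^{k}\varepsilon_j a_j$, where $\varepsilon_1,\ldots,\varepsilon_k\in\{0,1\}$ are independent and uniform. Any $z\in Z_n$ with $\mu(z)>0$ is an unsigned sum of at most $k=O(\log n)$ elements of $X$, so it suffices to show that $\mu$ has full support. A direct computation gives
\[\hat\mu(t)=\prod_{j=1}^{k}\frac{1+\omega^{ta_j}}{2},\qquad |\hat\mu(t)|^{2}=\prod_{j=1}^{k}\cos^{2}(\pi ta_j/n).\]
For fixed $t\neq 0$ and $a$ uniform in $Z_n^{*}$ one computes $\mathbb{E}\cos^{2}(\pi ta/n)=\tfrac{n-2}{2(n-1)}<\tfrac{1}{2}$, so by independence $\mathbb{E}|\hat\mu(t)|^{2}<(1/2)^{k}$. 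Plancherel then gives $\mathbb{E}\|\mu-U\|_{2}^{2}<(1/2)^{k}$, where $U$ is uniform on $Z_n$. Choosing $C$ so that $k>(2+\delta)\log_{2}n$ for some $\delta>0$, Markov's inequality combined with $\|\mu-U\|_{\infty}\le\|\mu-U\|_{2}$ yields $\|\mu-U\|_{\infty}<1/n$ with probability $1-o(1)$, forcing $\mu(z)>0$ for every $z\in Z_n$.

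The principal technical step is the Fourier estimate in property \ref{short:gen}; everything else is a routine union bound. Combining the high-probability events establishes the existence of an $X$ satisfying both conclusions of the lemma for every sufficiently large prime $n$.
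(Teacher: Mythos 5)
Your proof is correct, and while your treatment of condition 2 coincides with the paper's (the same union/first-moment bound over the $O(m^2)$ pairs $(i,j)$ and $O(\log^2 n)$ pairs $\{a,b\}$, using $m^2\log^2 n/n = O(1/\log^2 n)$), your treatment of condition 1 takes a genuinely different route. The paper invokes as a black box a lemma of Alon--Roichman type: $k=O(\log n)$ uniformly random elements generate the group with diameter at most $k$ with high probability. You instead prove the covering property directly for $Z_n$ by a second-moment Fourier computation: the distribution $\mu$ of a random unsigned subset sum $\sum_j \varepsilon_j a_j$ has $\mathbb{E}\,|\hat\mu(t)|^2 < 2^{-k}$ for each $t\neq 0$ (your computation of $\mathbb{E}\cos^2(\pi t a/n)=\tfrac{n-2}{2(n-1)}$ is right), so Plancherel plus Markov with $k>(2+\delta)\log_2 n$ forces $\mu$ to have full support, and any $z$ in the support is a sum of at most $k$ elements of $X$. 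This buys self-containedness and in fact yields a slightly stronger conclusion (each element is an \emph{unsigned subset sum}, no inverses or repetitions needed), at the cost of being specific to the abelian/cyclic setting where characters are one-dimensional; the paper's cited lemma works for arbitrary groups, which matters for the general framework of the preceding lemma even if not for the cyclic instantiation used here. Two cosmetic points: rather than ``conditioning'' on distinctness (which breaks independence), simply intersect the three high-probability events; and your restriction to $Z_n^*$ is a sensible hygiene step (it rules out $0\in X$, which would vacuously violate condition 2) that the paper's own proof glosses over.
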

Note that \cref{thm:main} directly follows from \cref{lem:cayley_metric} 
and \cref{lem:cyclic}. \\

To prove \cref{lem:cyclic} we use the following lemma from \cite{AR}.
\begin{lemma}\label{lem:expander}
There exists $k=O(\log n)$ such that if $G$ is a group of order $n$ and $X\subset G$ is a random subset of
$k$ uniformly independently chosen elements then with high probability $X$ generates $G$ with diameter 
at most $k$.
\end{lemma}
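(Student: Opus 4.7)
The plan is to establish this via the Alon-Roichman theorem on spectral expansion of random Cayley graphs, combined with the standard conversion from spectral gap to diameter.

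Set $k=C\log n$ for a sufficiently large absolute constant $C$, and let $X=\{x_1,\dots,x_k\}$ consist of $k$ independent uniformly chosen elements of $G$. Consider the normalized symmetric adjacency operator
\[
A \;=\; \frac{1}{2k}\sum_{i=1}^{k}\bigl(R_{x_i}+R_{x_i^{-1}}\bigr)
\]
on $\ell^2(G)$, where $R_g$ denotes right translation by $g$. Decomposing the right regular representation into irreducibles, $A$ splits blockwise as $A=\bigoplus_\pi A_\pi$ with $A_\pi=\frac{1}{2k}\sum_i\bigl(\pi(x_i)+\pi(x_i^{-1})\bigr)$, and for each \emph{nontrivial} irrep $\pi$ the summands are i.i.d.\ zero-mean Hermitian matrices of operator norm at most one (mean-zero by Schur orthogonality). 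The Ahlswede-Winter matrix Chernoff inequality then yields
\[
\Pr\bigl[\|A_\pi\|>\tfrac{1}{4}\bigr]\;\le\;2(\dim\pi)\,\exp(-c\,k)
\]
for an absolute constant $c>0$. Union-bounding over all nontrivial irreducible representations $\pi$, and using Cauchy-Schwarz with the Plancherel identity $\sum_\pi(\dim\pi)^2=n$ (which gives $\sum_\pi\dim\pi\le n$), the total failure probability is at most $2n\exp(-ck)$, which is $o(1)$ for $C$ sufficiently large. On the complementary event, the second-largest singular value satisfies $\lambda_2(A)\le 1/4$.

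Next I would convert this spectral gap into a diameter bound. For any $u,v\in G$, decomposing $A^t$ in the eigenbasis of $A$ with the top eigenvector the uniform vector $\tfrac{1}{\sqrt n}\mathbf{1}$ yields
\[
(A^t)_{uv}\;\ge\;\tfrac{1}{n}-\lambda_2(A)^{t},
\]
which is strictly positive whenever $t\ge \log n/\log 4$. Since $(A^t)_{uv}$ is proportional to the number of length-$t$ walks from $u$ to $v$ in $\Gamma(G,X\cup X^{-1})$, positivity of this quantity implies that $v$ is reachable from $u$ in at most $\lceil\log n/\log 4\rceil$ steps. Choosing $C$ large enough to make both the Chernoff bound $o(1)$ and $k=C\log n\ge \log n/\log 4$ then forces the diameter to be at most $k$, which in particular implies that $X$ generates $G$.

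The principal obstacle is the matrix concentration step. A naive scalar Chernoff bound applied entry-wise is useless because irreducible representations of $G$ can have dimension as large as $\sqrt n$, so controlling a single entry of $A$ says nothing about its norm. Only a genuinely non-commutative concentration inequality, such as that of Ahlswede-Winter, keeps the dimension penalty linear in $\dim\pi$; the resulting union bound over irreps then scales like $\sum_\pi\dim\pi\le n$ and can be overcome by the exponential gain from taking $k=\Theta(\log n)$ independent generators.
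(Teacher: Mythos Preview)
The paper does not prove this lemma; it merely cites it from the Alon--Roichman paper \cite{AR} and uses the result as a black box. Your proposal therefore cannot be compared against ``the paper's own proof,'' because there is none.

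That said, your argument is a correct self-contained proof and is essentially the now-standard matrix-Chernoff route to the Alon--Roichman theorem (the Landau--Russell refinement, rather than Alon and Roichman's original trace-moment argument). The decomposition of the regular representation into irreducible blocks, the application of Ahlswede--Winter to each nontrivial $A_\pi$, the union bound via $\sum_\pi \dim\pi \le n$, and the spectral-gap-to-diameter conversion $(A^t)_{uv}\ge \tfrac{1}{n}-\lambda_2(A)^t$ are all sound. One small remark: in the diameter step, make explicit that $\lambda_2(A)$ denotes the operator norm of $A$ restricted to $\mathbf{1}^\perp$ (i.e., the largest \emph{absolute} nontrivial eigenvalue), since this is what the matrix Chernoff bound controls and what is needed for the inequality $|(A^t)_{uv}-\tfrac{1}{n}|\le \lambda_2(A)^t$ to go through when $t$ is not assumed even.
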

\begin{proof} (\Cref{lem:cyclic}) 
As in \cref{lem:expander}, let $G = Z_n$, and let $X\subset Z_n$ be
a random subset of $k$ uniformly independently chosen elements of $Z_n$,
where $k=O(\log n)$. Condition \ref{short:gen} of 
\cref{lem:cyclic} is satisfied with high probability by 
lemma \ref{lem:expander}.\\
We turn to condition \ref{no:repeats}. Let $a\neq b\in Z_n$
be a pair of elements that fails it, namely
\begin{equation}\label{collision}
ia=jb \text{~for some~} i, j \text{~with~} 1\le |i|,|j|\le m.
\end{equation}
We call $\{a,b\}$ {\em a forbidden pair}, and wish to show that
with almost certainty a randomly chosen $X$ 
contains no forbidden pair. Relation (\ref{collision}) 
clearly implies that $b = j^{-1} i a$. 
Consequently, any $a\in Z_n$ forms a forbidden pair $\{a,b\}$ with at most
$2m^2$ elements $b$ in $Z_n$. 
Therefore, the expected number of forbidden pairs in a
randomly chosen $X$ is at most
\[\binom{|X|}{2}\frac{2m^2}{n}\le O\left(\frac{1}{\log^2 n}\right)\]
By Markov's Inequality, with high probability $X$ 
contains no forbidden pairs.
 \end{proof}

\section{Realizable Values of $\Delta(\mathcal{P})$} \label{sec:realizable}
Alex Scott has asked us whether $\Delta(\mathcal{P})$ can take on irrational values. 
Here is our answer:
\begin{theorem}\label[theorem]{thm:algebraic}
For every $n$-point consistent path system $\mathcal{P}$, 
the real number $\Delta(\mathcal{P})$
is algebraic and (its minimal polynomial) can be determined in $\text{poly}(n)$ time.
\end{theorem}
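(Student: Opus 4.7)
The plan is to view $\Delta(\mathcal{P})$ as the optimal value of a parametric LP in the edge weights $w$ and to extract the minimal polynomial of $\alpha^\ast := \Delta(\mathcal{P})$ from the tight constraints at the optimum.

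Using the equivalent formulation given earlier in the paper,
\[\alpha^\ast = \inf\bigl\{\alpha \ge 1 : \exists\, w : E \to \mathbb{R}_{>0},\ w(P_{u,v}) \le \alpha\, w(Q) \text{ for all } u,v \text{ and every } (uv)\text{-path } Q\bigr\}.\]
For fixed $\alpha$ this is a homogeneous linear-feasibility problem in $w$. Although the constraint family is exponential, it admits a polynomial-time separation oracle: given a candidate $w$, compute the shortest-path distances $d_w(u,v)$ by Dijkstra and verify $w(P_{u,v}) \le \alpha\,d_w(u,v)$ for every pair. Hence the ellipsoid method decides feasibility in $\mathrm{poly}(n)$ time, and bisection on $\alpha \in [1,n]$ produces an approximation to $\alpha^\ast$ of accuracy $2^{-\mathrm{poly}(n)}$ together with a near-optimal normalized $w^\ast$.

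At $\alpha^\ast$ the feasibility polytope is extremal, and I would argue, via a standard basic-feasible-solution argument, that there exist triples $(u_i,v_i,Q_i)$ with $w^\ast(P_{u_i,v_i}) = \alpha^\ast\, w^\ast(Q_i)$ for each $i$ such that the homogeneous linear system
\[M(\alpha)\,w = 0, \qquad M(\alpha)[i,e] := \mathbf{1}[e \in P_{u_i,v_i}] - \alpha\,\mathbf{1}[e \in Q_i] \in \mathbb{Z}[\alpha],\]
admits a nonzero nonnegative solution at $\alpha = \alpha^\ast$ but not for any smaller $\alpha$. This forces a maximal minor of $M(\alpha)$ to vanish at $\alpha^\ast$; such a minor is a polynomial in $\mathbb{Z}[\alpha]$, so $\alpha^\ast$ is algebraic. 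To output its minimal polynomial in $\mathrm{poly}(n)$ time, I would read the candidate tight constraints off the shortest-path DAG of the numerical $w^\ast$, form a nonzero maximal minor $p(\alpha) \in \mathbb{Z}[\alpha]$ of $M(\alpha)$, factor $p$ over $\mathbb{Q}$ via the Lenstra--Lenstra--Lov\'asz algorithm, and select the irreducible factor whose real root matches the numerical estimate of $\alpha^\ast$.

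The main obstacle is robustly identifying the tight set $T$: the adversarial path $Q$ a priori ranges over exponentially many $(uv)$-paths, and ties among shortest paths under $w^\ast$ (compounded by finite-precision arithmetic) may obscure which constraints are truly tight. My plan is to collect every structurally distinct shortest-path constraint from the shortest-path DAG of a sufficiently precise $w^\ast$ (the relevant data being only the edge-incidence vector $\mathbf{1}[e \in Q]$, of which there are polynomially many realized) and to refine the ellipsoid's precision until the tight-versus-slack classification stabilizes. Once $T$ is correctly identified, the minor, LLL factorization, and root-matching steps all run in polynomial time.
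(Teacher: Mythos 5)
Your high-level plan --- approximate $\Delta(\mathcal{P})$ numerically by bisection on a parametric LP, argue via tight constraints and determinants that it is a root of an integer polynomial of polynomial bit size, then recover the minimal polynomial --- is the same strategy as the paper's. But your execution has a genuine gap precisely at the point you flag as ``the main obstacle'': the algorithmic identification of the tight set $T$. ``Refine the ellipsoid's precision until the tight-versus-slack classification stabilizes'' is not an algorithm --- you have no stopping criterion and no way to certify that the classification has stabilized, since a constraint with astronomically small positive slack is indistinguishable from a tight one at any fixed precision. The paper avoids this entirely: the tight-constraint/Cramer argument appears only in the \emph{analysis}, to prove that $\alpha=\Delta(\mathcal{P})$ is algebraic with $\mathrm{size}(\alpha)\le N=\mathrm{poly}(n)$ (using the Landau--Mignotte bound to pass from ``root of some polynomial of size $O(n^6)$'' to a bound on the minimal polynomial). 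The \emph{algorithm} is then just bisection to accuracy $2^{-5N^3}$ followed by a black-box invocation of the fact that an algebraic number of size $\le N$ is uniquely determined by, and its minimal polynomial efficiently computable from, such an approximation (lattice reduction under the hood, but with no need to ever know which constraints are tight). You should restructure your proof the same way: never compute $T$; only prove it exists and bound the resulting polynomial.

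Two further points need repair. First, your claim that a nonzero nonnegative solution of $M(\alpha)w=0$ at $\alpha=\alpha^\ast$ but not below ``forces a maximal minor to vanish at $\alpha^\ast$'' does not follow: if $M$ has more columns than rows the kernel is always nontrivial, and nonnegativity of a kernel vector is not detected by minors. The correct route is the paper's: restrict to a basic feasible solution $y(t)=A_I(t)^{-1}b_I$ whose coordinates are ratios of determinants by Cramer's rule, and obtain the polynomial either from $\det A_I(t)$ vanishing at $\alpha$ or from the constraint that switches from satisfied to violated as $t$ crosses $\alpha$. Second, you implicitly assume the infimum is attained (existence of $w^\ast$ with tight constraints at $\alpha^\ast$); this requires an argument (in the paper it falls out of the continuity of $y(t)$ at $\alpha$ when $\det A_I(\alpha)\neq 0$). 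Finally, a simplification: your exponential constraint family and separation oracle are unnecessary --- by putting a variable on every \emph{pair} of vertices and imposing the triangle inequalities explicitly, as in Observation~\ref{obs:LP}, one gets a polynomial-size LP outright, which also makes the basic-feasible-solution bookkeeping much cleaner than extracting a BFS from an ellipsoid run.
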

and in particular,
\begin{theorem}\label[theorem]{thm:irrational}
There exist consistent path systems $\mathcal{P}$ for which $\Delta(\mathcal{P})$ is irrational.
\end{theorem}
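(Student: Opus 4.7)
The statement is purely existential, so it suffices to exhibit a single consistent path system $\mathcal{P}$ with $\Delta(\mathcal{P})\notin\mathbb{Q}$. My plan is to construct a small explicit example and compute $\Delta(\mathcal{P})$ as an irrational algebraic number, piggybacking on the mechanism underlying \cref{thm:algebraic}.

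The underlying reason $\Delta(\mathcal{P})$ can fail to be rational is that its defining min--max,
\[\Delta(\mathcal{P})=\inf_{\rho}\ \max_{P_{u,v}\in\mathcal{P}}\ \frac{\rho(P_{u,v})}{\rho(u,v)},\]
is a bilinear optimization in $(\alpha,\rho)$, not a linear program: the constraint $\rho(P_{u,v})\le\alpha\rho(u,v)$ is bilinear. At the optimum, some collection of these constraints is active, together with some triangle inequalities, and the active system takes the form $(A-\Delta(\mathcal{P})\,B)\rho=0$ for integer matrices $A,B$ determined combinatorially by the choice of active set. A positive solution $\rho$ exists only when a suitable subdeterminant of $A-\alpha B$ vanishes; this generalized eigenvalue equation is essentially the algebraic equation that \cref{thm:algebraic} extracts, and its roots are in general irrational.

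To force irrationality, I would design $\mathcal{P}$ so that at least two $\alpha$-metric constraints are simultaneously active at the optimum and share variables, so the resulting system is genuinely two-dimensional rather than decoupled into separate linear equations in $\alpha$. A natural testbed is the $G$-invariant framework of \cref{prop:words}, \cref{prop:invariant_metric} and \cref{lem:cayley_metric}: imposing $G$-invariance via \cref{prop:invariant_metric} collapses the optimization to a handful of variables indexed by distance classes of $G$, making it feasible to enumerate active configurations by hand. For small cyclic or dihedral $G$ and a well-chosen generating set, one seeks a configuration whose characteristic polynomial in $\alpha$ is an irreducible quadratic of non-square discriminant, yielding $\Delta(\mathcal{P})=p+q\sqrt{r}$ with $r>1$ squarefree.

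The main obstacle is verifying that the active configuration one writes down is indeed the one attained at the optimum, rather than some other configuration yielding a smaller (possibly rational) $\alpha$. For a small symmetric example, the number of candidate active sets is small enough that one can enumerate them, compute the associated value of $\alpha$, and compare. Once a specific $\mathcal{P}$ and the corresponding quadratic are in hand, irrationality of $\Delta(\mathcal{P})$ follows immediately by inspecting the discriminant.
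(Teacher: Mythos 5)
Your proposal is a strategy outline rather than a proof. The statement is purely existential, and you correctly identify that the only way to prove it is to exhibit a concrete $\mathcal{P}$ and pin down $\Delta(\mathcal{P})$ exactly — but you never actually produce such a $\mathcal{P}$, nor the polynomial, nor the verification. Everything after ``my plan is'' is conditional: \emph{if} one finds a small $G$-invariant system, \emph{if} the right constraints are active, \emph{if} the resulting characteristic polynomial is an irreducible quadratic, then irrationality follows. The entire mathematical content of the theorem lives in discharging those conditionals, and you explicitly flag the hardest one (``verifying that the active configuration one writes down is indeed the one attained at the optimum'') without resolving it. As written, nothing has been proved.

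For comparison, the paper's proof does exactly what your outline calls for, and the work it takes is instructive about what is missing from your sketch. It takes the $Z_{29}$-invariant path system $\mathcal{P}_{29}$ on the Paley graph of $\mathbb{F}_{29}$ (consistency having been established in earlier work), uses \cref{prop:invariant_metric} to reduce to $14$ distance-class variables $w_1,\dots,w_{14}$, and then proves a two-sided bound: the upper bound $\Delta(\mathcal{P}_{29})\le r$ requires writing down an explicit metric (each $w_a$ given as a polynomial in $r$) and checking all triangle and path inequalities; the lower bound $\Delta(\mathcal{P}_{29})\ge r$ requires isolating a $10$-inequality subsystem of the LP and running Fourier--Motzkin elimination to extract the necessary condition $0\le -w_9(t+2)(2t^3-3t^2-10t+12)$, which for $1\le t\le r$ forces $t=r$. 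Here $r$ is the middle root of the irreducible cubic $2t^3-3t^2-10t+12$ — note, a cubic, not the quadratic your sketch anticipates, which is a symptom of the fact that the decisive computation has not been attempted. Your heuristic that the optimum is governed by a generalized eigenvalue equation $(A-\alpha B)\rho=0$ is a reasonable guide, but it is not a substitute for (i) a concrete consistent path system, (ii) a matching upper and lower bound on $\Delta$, and (iii) an irreducibility check; without all three the proof has a genuine gap.
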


Our proof of \cref{thm:algebraic} uses several notions and results from 
algorithmic algebra, starting with bit sizes. The bit-size of
an integer $r$ is defined to be 
$\text{size}(r) \coloneqq 1+\lceil\log(|r|+1)\rceil$. 
The bit-size of a reduced rational 
number $p/q$ is 
$\text{size}(p/q)\coloneqq \text{size}(p)+\text{size}(q) + \log(\text{size}(p))$. 
For a rational polynomial 
$F(X) \coloneqq \sum_{i=0}^{m}a_i X^i \in \mathbb{Q}[X]$ it is
$\text{size}(F)\coloneqq \sum_{i=0}^{m} \text{size}(a_i)+ \log(\text{size}(a_i))$. \\
The minimal polynomial of $\alpha \in \overline{\mathbb{Q}}$, an algebraic number
is denoted by $F_{\alpha}(X)\in \mathbb{Z}[X]$,
and $\text{size}(\alpha)$ is taken to be $\text{size}(F_{\alpha})$,
as defined above.\\
The following result says that any algebraic number
$\alpha \in \overline{\mathbb{Q}}$ can be efficiently recovered from
any ``good enough'' rational approximation of $\alpha$.

\begin{theorem}(e.g., \cite{Yap} chapter 9.6)\label[theorem]{thm:approx}
Given a rational number $a$ and a positive integer $N$
there is at most one algebraic number $\alpha$ of size $\le N$
with $|a-\alpha|\le 2^{-5N^3}$. If such $\alpha$ exists,
then its minimal polynomial can be found in $\text{poly}(N)$ time.
\end{theorem}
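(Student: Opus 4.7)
The plan is to split the proof into a uniqueness claim and a constructive recovery procedure, both of which hinge on Mahler/Mignotte type root-separation bounds for algebraic numbers of bounded size.

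\textbf{Uniqueness.} Suppose for contradiction that distinct algebraic numbers $\alpha \neq \beta$, both of size $\le N$, satisfy $|a-\alpha|,|a-\beta| \le 2^{-5N^3}$. Then $|\alpha-\beta| \le 2^{-5N^3+1}$ by the triangle inequality. I would invoke a classical root-separation bound: writing $F_\alpha, F_\beta \in \mathbb{Z}[X]$ for the minimal polynomials of $\alpha,\beta$, the product $F = F_\alpha F_\beta$ (or the squarefree part of $F_\alpha^2$ when $F_\alpha = F_\beta$) is a nonzero integer polynomial of degree and bit-size $O(N)$, whose discriminant is a nonzero integer and hence has absolute value at least $1$. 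Expanding $|\operatorname{disc}(F)| \ge 1$ as a product over root pairs, and bounding the remaining pairwise factors using the Mahler measure estimate $M(F) \le 2^{O(N)}$, yields $|\alpha-\beta| \ge 2^{-cN^3}$ for a constant $c$ that can be pushed strictly below $5$. This contradicts $|\alpha-\beta| \le 2^{-5N^3+1}$ and forces $\alpha = \beta$.

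\textbf{Recovery via LLL.} For the algorithmic half, I would apply lattice basis reduction. Fix a degree bound $D = N$, a scaling factor $K = 2^M$ with $M = \Theta(N^3)$, and let $\tilde a \in \mathbb{Q}$ be a dyadic truncation of $a$ of precision $2^{-5N^3}$. Build the integer lattice $L \subset \mathbb{Z}^{D+2}$ with basis
\[
b_i = \bigl(0,\dots,0,\,1,\,0,\dots,0,\,\lfloor K\,\tilde a^{\,i} \rfloor\bigr),\qquad i=0,1,\dots,D,
\]
where the $1$ sits in coordinate $i$. A lattice vector $(c_0,\dots,c_D,v)$ corresponds to the polynomial $P(X) = \sum_i c_i X^i$ with $v$ approximately $K\,P(\tilde a)$. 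Since $F_\alpha(\alpha) = 0$, the mean value theorem together with $\|F_\alpha\|_\infty \le 2^{O(N)}$ gives $|F_\alpha(a)| \le 2^{O(N)} \cdot 2^{-5N^3}$, so the lattice vector associated to $F_\alpha$ has Euclidean length $2^{O(N)}$. Running LLL on $L$ in $\mathrm{poly}(N)$ time returns a vector whose norm is within a factor $2^{D/2}$ of the shortest, hence still $2^{O(N)}$.

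\textbf{Identification of the minimal polynomial and main obstacle.} It remains to certify that the LLL output encodes $F_\alpha$ up to a rational scalar. The crux is a Liouville-type dichotomy: any nonzero integer polynomial $P$ of size $\mathrm{poly}(N)$ for which $|P(a)| \le 2^{-\Omega(N^3)}$ must satisfy $P(\alpha) = 0$, because otherwise $P(\alpha)$ would be a nonzero algebraic number of controlled size, and hence bounded below by $2^{-\Omega(N^3)}$ by a Mahler/Liouville bound, contradicting the derivative estimate $|P(\alpha) - P(a)| \le 2^{O(N)}\,|a - \alpha|$. Consequently the recovered polynomial is divisible by $F_\alpha$ in $\mathbb{Q}[X]$, and $F_\alpha$ is extracted in polynomial time by any standard factorization algorithm for rational polynomials (itself LLL-based). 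The main obstacle is the precise parameter budgeting: the exponent $5N^3$ in the hypothesis must simultaneously dominate (i) the Mahler separation bound driving uniqueness, (ii) the target-vector length estimate that ensures $F_\alpha$ is detectable by LLL, and (iii) the $2^{D/2}$ slack incurred by LLL's approximation guarantee, while still remaining below the separation threshold at which a polynomial other than a multiple of $F_\alpha$ could survive as a short vector. The constant $5$ is chosen to absorb all three contributions with room to spare.
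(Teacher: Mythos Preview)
The paper does not prove this theorem at all: it is quoted as a known result with the citation ``(e.g., \cite{Yap} chapter 9.6)'' and used as a black box in the proof of \cref{thm:algebraic}. So there is no in-paper proof to compare your proposal against.

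For what it is worth, your sketch follows the standard route that the cited reference takes: a Mahler/Mignotte root-separation bound for uniqueness, and the Kannan--Lenstra--Lov\'asz lattice construction followed by LLL reduction and rational factorization for recovery. The outline is sound, with the usual caveat that making the constants line up (so that $5N^3$ really dominates the separation, target-vector, and LLL-slack terms) requires some bookkeeping you have only gestured at. But since the paper treats the statement as an off-the-shelf tool, any correct proof you supply goes beyond what the paper itself provides.
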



The following observation immediately follows from the definitions: 
\begin{observation}\label[observation]{obs:LP}
Let $\mathcal{P}$ be a consistent path system with vertex set $V$. 
Associate a formal variable $x_{a,b} = x_{b,a}$ with every pair $a, b\in V$. 
The path system $\mathcal{P}$ is $t$-metric if and only if the following systems of 
linear inequalities is feasible:
    \begin{equation}\label{eq:LP1}
    \begin{split}
        x_{a,b} & \le x_{a,c} + x_{c,a} \hspace{2cm} \forall a,b,c\in V\\
        x_{a,a} &= 0 \hspace{3.6cm}\forall a\in V\\
        x_{a,b} & \ge 1 \hspace{3.6cm}\forall a,b \in V, a\neq b \\
         x_{a_{0},a_{1}} + \cdots + x_{a_{k-1},a_{k}} & \le t x_{u,v} \hspace{3cm} \forall u,v \in V, \ P_{u,v} = (u=a_0,\dots , a_{k} = v)
    \end{split}
    \end{equation}
\end{observation}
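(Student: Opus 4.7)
The plan is to establish the ``if and only if'' directly from the definitions, with a scaling step as the only technical subtlety.

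\textbf{Forward direction.} Suppose $\mathcal{P}$ is $t$-metric, witnessed by a metric $\rho$ on $V$. Since $\rho$ separates distinct points and $V$ is finite, $\delta := \min_{a \neq b} \rho(a,b) > 0$. I would set $x_{a,b} := \rho(a,b)/\delta$. The triangle inequality for $\rho$ gives the first family of LP constraints (after correcting the apparent typo $x_{c,a}$ to $x_{c,b}$), the definition of a metric gives $x_{a,a} = 0$, the choice of $\delta$ gives $x_{a,b} \ge 1$ for $a \neq b$, and the $t$-metric property $\rho(P_{u,v}) \le t \rho(u,v)$ together with homogeneity of both sides of the path constraint under positive scaling yields the path constraints. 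Hence the LP is feasible.

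\textbf{Reverse direction.} Conversely, given a feasible assignment $\{x_{a,b}\}$, define $\rho(a,b) := x_{a,b}$. Symmetry is built into the variable convention $x_{a,b} = x_{b,a}$; the triangle inequalities and $x_{a,a} = 0$ are explicit LP constraints; and $x_{a,b} \ge 1$ for $a \neq b$ ensures strict positivity on distinct points. Thus $\rho$ is a metric on $V$. The path constraints read precisely $\rho(P_{u,v}) \le t \cdot \rho(u,v)$ for every $P_{u,v} \in \mathcal{P}$, and the complementary inequality $\rho(u,v) \le \rho(P_{u,v})$ required by the definition of $\alpha$-metric follows automatically from iterating the triangle inequality along the path. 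So $\mathcal{P}$ is $t$-metric with witness $\rho$.

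\textbf{Remark on the only subtle point.} The constraint $x_{a,b} \ge 1$ has no counterpart in the definition of $t$-metricity; it is purely a nontriviality normalization, since without it the all-zero assignment would vacuously satisfy the other constraints. This is why the forward direction requires the explicit rescaling by $1/\delta$: we must verify that any witnessing metric can be adjusted to meet this normalization without breaking the other constraints, which is immediate since every constraint in \eqref{eq:LP1} is homogeneous of degree one in the variables $x_{a,b}$. I do not anticipate any genuine obstacle beyond this bookkeeping.
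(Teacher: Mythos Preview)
Your argument is correct and matches the paper's treatment: the paper states this observation without proof, remarking only that it ``immediately follows from the definitions,'' and your write-up simply spells out those immediate details (including the scaling normalization and the typo in the triangle-inequality constraint). There is nothing more to add.
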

We now prove \cref{thm:algebraic}.
\begin{proof}(\Cref{thm:algebraic})
Set $\alpha = \Delta(\mathcal{P})$. In order to apply \Cref{thm:approx},
and prove \cref{thm:algebraic}, we need to show that
\begin{enumerate}
\item
$\alpha$ can be efficiently approximated very closely by rationals.
\item 
$\alpha$ is algebraic, and
\item 
its bit size is not too big.
\end{enumerate}

{\bf 1.} We start with the first item: Clearly,
LP \eqref{eq:LP1} is feasible whenever $t>\alpha$ and infeasible for $t<\alpha$,
and $1\le \alpha \le n$. 
Therefore, we can approximate $\alpha$ by applying binary search over the interval $[1,n]$ 
checking each time the feasibility of \eqref{eq:LP1}. By classical LP theory, (e.g., \cite{BT}), 
the feasibility of \eqref{eq:LP1} can be decided in
time $\text{poly}(n,\text{size}(t))$. In particular, if $N=\text{poly}(n)$ then an approximation 
satisfying $|a-\alpha|\le 2^{-5N^3}$ can be found in time polynomial in $n$.

{\bf 2.} That $\alpha$ is algebraic follows from \cref{obs:LP} and fundamental results in 
real algebraic  geometry, e.g., chapter 2 in \cite{SPR}, but we turn to give a direct proof.
Let us write the system of linear inequalities in \eqref{eq:LP1} as $Ax\le b$ where 
$A = A(t)$ is a $m\times k$ matrix with entries in $\{0\pm 1, \pm t\}$ and  
$b\in \mathbb{R}^{m}$  with entries in $\{0,\pm 1\}$. Note that $m=\Theta(n^3)$ and 
$k=\Theta(n^2)$. 
By basic LP theory (e.g., \cite{BT}) if this system is feasible then it admits 
a basic feasible solution (BFS). 
In our case, that would be a non-negative vector $y\in \mathbb{R}^k$ that
is defined by $A_Iy = b_I$, where $I\subset [m]$ is a subset of $k$ row indices and $A_I$ is a non-singular $k\times k$ minor of $A$ obtained by restricting to the rows in $I$, similarly with $b_I$.\\
As mentioned, the system $A(t) x \le b$ is feasible for all $t >\alpha$. 
So, there must exist a subset $I\subset [m]$ such that the set 
$\{t>\alpha : \text{$y(t) = A_I(t)^{-1}b_I$ is a BFS}\}$ has $\alpha$ as a limit point.
Cramer's rule yields an explicit expression for the BFS $y(t)$.
Namely, for $1\le i \le k$, its $i$-th coordinate equals
$y_i(t) = \frac{\det(A_{I,i}(t))}{\det(A_{I,0}(t))}$,
where the matrix $A_{I,i}$ is obtained from $A_{I}$ by replacing 
$i$-th column in $A_{I}$ by $b_I$ and $A_{I,0}$ stands for $A_{I}$.
Let $f_i(t) := \det(A_{I,i}(t))$ and observe that $f_i$ is a polynomial in $t$
with $\text{size}(f_i)\le k^3 = O(n^6)$. The bound on the size follows, since
$A_{I,i}$ is a $k\times k$ matrix with entries in $\{0,\pm 1, \pm t\}$.\\ 
If $\alpha$ is a root of $f_0(t)$ then it is algebraic and we are done. 
Otherwise, by continuity, $y_i(\alpha) = \frac{f_i(\alpha)}{f_0(\alpha)}$ 
(with $i=1,\ldots,k$) is a solution to the system $A(\alpha)x\le b$. 
Now consider what happens for $t$ slightly smaller than $\alpha$. The expression
$y_i(t) = \frac{f_i(t)}{f_0(t)}$ is still well-defined, and yet $A(t)x\le b$ is 
infeasible, and at least one of these inequalities fails. Namely,
there exists $1\le j \le m$ such that the set 
$\{ t<\alpha: a_{j,1}y_1(t) + \cdots + a_{j,k}y_k(t) - b_j > 0\}$
has $\alpha$ as a limit point. On the other hand, since $y(\alpha)$ is feasible solution, it must be that $a_{j,1}y_1(\alpha) + \cdots + a_{j,k}y_k(\alpha) - b_j =0$. 
Since $y_i(t) = \frac{f_i(t)}{f_0(t)}$, we  conclude that the polynomial 
\[g(t) = a_{j,1}f_1(t) + \cdots + a_{j,k}f_k(t) - b_j f_0(t)\]
is non-constant polynomial with root $\alpha$. Moreover, 
$\text{size}(g)$ is also bounded by $k^3 = O(n^6)$.

{\bf 3.} We need to show that $\text{size}(\alpha)\le O(\text{poly}(n))$. 
We already know that $\alpha$ is a root of some integer polynomial whose bit size
we managed to bound. However,
we need to bound the size of $F_{\alpha}$, its minimal polynomial. The missing
ingredient is the Landau-Mignotte bound (\cite{Yap} chapter 4.5). It says 
that $\|\Phi\|_1\le 2^d \|\Psi\|_1$ for any two polynomials $\Phi(X), \Psi(X) \in \mathbb{Z}[X]$
such that $\Phi$ divides $\Psi$. Here $\|\cdot\|_1$ denotes the $\ell^1$ norm of the 
coefficients and $d=\text{deg}(\Psi)$.
\end{proof}

We now prove \cref{thm:irrational} by providing an explicit example of a path system
for which $\Delta$ is irrational.
We first recall some basic properties of {\em Paley graphs} \cite{Paley}.
Associated with every prime $p\equiv 1 \pmod 4$ 
is a graph $G_{p}$ with vertex set
$\mathbb{F}_{p}$, where the vertices 
$a$ and $b$ are neighbors in $G_{p}$ if and only $a-b$ is a quadratic residue
in $\mathbb{F}_{p}$. In other words, $G_{p}$ is the Cayley graph
of the cyclic group $Z_p$ whose generators are the quadratic residues of 
$\mathbb{F}_{p}$. We have constructed in \cite{CL2}
an infinite family of non-metrizable $Z_p$-invariant path systems over such graphs,
$\mathcal{P}_{p}$ that are defined as follows: for $a,b\in \mathbb{F}_{p}$
\begin{itemize}
    \item If $a-b$ is a quadratic residue then $P_{a,b} = (a,b)$
    \item If $a-b\not \equiv \pm 3 \pmod p$ is a quadratic non-residue, then $P_{a,b} =\left( a,\frac{a+b}{2}, b\right) $
    \item If $a-b\equiv \pm 3 \pmod p$ then $P_{a,b} =\left( a,a\pm 1,a\pm 2, b\right)$
\end{itemize}
Here, we consider the case $p=29$, and prove the following claim:
\begin{claim}
$\Delta(\mathcal{P}_{29})$ is the middle root of the polynomial $2x^3-3x^2-10x+12$. 
Its numerical value is $\approx 1.103$, and it is irrational.
\end{claim}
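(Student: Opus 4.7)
The plan is to reduce the computation of $\Delta(\mathcal{P}_{29})$ to a small, explicit linear program and then solve it symbolically. Since $\mathcal{P}_{29}$ is $Z_{29}$-invariant, Proposition~\ref{prop:invariant_metric} allows me to restrict attention to $Z_{29}$-invariant metrics $\rho$ on $\mathbb{F}_{29}$; such a metric is determined by the fourteen numbers $d_k := \rho(0,k)$ for $k=1,\ldots,14$, with $d_k=d_{29-k}$. The three path types in the definition of $\mathcal{P}_{29}$ translate into a short list of inequalities on the $d_k$: a length-three constraint $3d_1\le\alpha d_3$ coming from $P_{0,3}=(0,1,2,3)$, together with six midpoint constraints of the form $2 d_{m(k)}\le \alpha d_k$, one per unordered pair $\{k,-k\}$ of quadratic non-residues with $k\not\equiv\pm 3$, where $m(k)\in\{1,\ldots,14\}$ is the absolute value modulo $29$ of the element $k/2\in \mathbb{F}_{29}$. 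Adjoining the triangle inequalities $d_{i+j}\le d_i+d_j$ (indices in $\mathbb{Z}_{29}$) yields a finite LP whose optimum is exactly $\Delta(\mathcal{P}_{29})$.

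Next, I would solve this LP numerically to locate the optimum near $\alpha\approx 1.103$ and read off the set of active constraints there. Following the argument in the proof of Theorem~\ref{thm:algebraic}, a tight basis at the optimum yields an $\alpha$-dependent linear system whose degeneracy condition produces an integer polynomial vanishing at $\Delta(\mathcal{P}_{29})$; the aim is to show that this polynomial coincides, up to an irrelevant factor, with $p(x):=2x^3-3x^2-10x+12$. To package this as a clean proof I would (a) exhibit an explicit $Z_{29}$-invariant metric whose distances $d_k$ are rational functions of $\alpha$ and verify every triangle and path inequality when $\alpha$ equals the middle root of $p$, and (b) produce a matching dual certificate, i.e.\ a nonnegative combination of active primal constraints forcing $\alpha\ge$ that middle root. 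A sign check of $p$ on the intervals $(-3,-2),(1,2),(2,3)$ then identifies the middle root as the one near $1.103$.

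Irrationality is immediate from the rational root theorem: any rational root of $p$ must have numerator dividing $12$ and denominator dividing $2$, so it lies in $\{\pm 1,\pm 2,\pm 3,\pm 4,\pm 6,\pm 12,\pm 1/2,\pm 3/2\}$, and a brief substitution rules out each candidate, so $p$ is irreducible over $\mathbb{Q}$ and each of its three real roots is irrational. The main obstacle I anticipate is step (a)--(b): correctly isolating the active set among the many triangle and path inequalities, and then organizing the resulting linear-algebraic data so that the minimal polynomial emerges as exactly $p(x)$ rather than a spurious multiple. The LP is small enough to be tractable either by hand or with a symbolic solver, but this combinatorial bookkeeping carries the bulk of the work.
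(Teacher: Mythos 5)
Your strategy is the same as the paper's: reduce to a $Z_{29}$-invariant metric via Proposition~\ref{prop:invariant_metric}, encode the three path types as the constraints $3d_1\le\alpha d_3$ and $2d_{m(k)}\le\alpha d_k$ together with the triangle inequalities, prove the upper bound by exhibiting an explicit invariant metric whose distances are polynomials in $r$, and prove the lower bound by a nonnegative combination of constraints that forces $\alpha\ge r$. Your constraint bookkeeping (six midpoint constraints, one per $\pm$-pair of non-residues other than $\pm 3$) is correct, and your irrationality argument via the rational root theorem is a perfectly good substitute for the paper's irreducibility check mod $5$.

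The problem is that you stop exactly where the content of the claim begins. The claim is not merely that $\Delta(\mathcal{P}_{29})$ is an algebraic number computable from some LP; it is that it equals the middle root of the \emph{specific} cubic $2x^3-3x^2-10x+12$. Establishing that requires actually producing the two certificates you describe in steps (a) and (b), and you explicitly defer both, acknowledging that this ``carries the bulk of the work.'' The paper does carry it out: for (a) it writes down the fourteen weights $w_1,\dots,w_{14}$ as explicit polynomials in $r$, and for (b) it isolates a ten-inequality subsystem (five path constraints and five triangle inequalities) and runs Fourier--Motzkin elimination in a specified variable order to obtain $0\le -w_9(t+2)(2t^3-3t^2-10t+12)$, which together with $w_9>0$ and $t\le r$ pins down $t=r$. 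Your plan of numerically locating the optimum, reading off the active set, and invoking the Cramer's-rule argument from Theorem~\ref{thm:algebraic} would in principle produce an integer polynomial vanishing at $\Delta(\mathcal{P}_{29})$, but until that computation is done you have no argument that the resulting polynomial is (a multiple of) $2x^3-3x^2-10x+12$ rather than something else, nor that the root in question is the middle one. As written, the proposal is a correct and well-aimed outline of the paper's proof, not a proof.
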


\begin{proof}
The irrationality of $\Delta(\mathcal{P}_{29})$ follows, since the polynomial
$2x^3-3x^2-10x+12$ is easily seen to be irreducible 
(e.g., it has no roots in $\mathbb{F}_5$). It is also easily verifiable that
this polynomial has three real roots $r_1<r_2<r_3$, and we set $r=r_2\approx 1.103$. 
To prove that $\Delta(\mathcal{P}_{29})\le r$, we define an explicit
group-invariant (closed under addition $\bmod 29$)
metric $\rho$ on $\mathbb{F}_{29}$ that approximates $\mathcal{P}_{29}$ 
to a factor of $r$. To this end we specify
$\rho(u,v) = w_a$ whenever $u-v\equiv\pm a\pmod{29}$ 
as follows:
        \begin{equation*}
	\begin{split}
		w_{1} & = r^2 \\
		w_{2} & = 2r \\
		w_{3} & = 3r \\
		w_{4} & = 3r  - r^2\\
		w_{5} & = 3r  - r^2
		\end{split}
    \hspace{2.5cm}
        \begin{split}
		w_{6} & = 6- 2r - r^2\\
		w_{7} & = 6 - 2r-r^2 \\
        w_{8} & = 6- 2r\\
		w_{9} & = 6 - 2r-r^2\\
		w_{10} & = 6 - 2r
		\end{split}
		\hspace{2.5cm}
    \begin{split}
		w_{11} & = 6 + r-2r^2\\
		w_{12} & = 6 + r-2r^2\\
		w_{13} & = 6 + r-3r^2\\
		w_{14} & = 6 + r-2r^2
	\end{split}
\end{equation*}
    It can be verified that $\rho$ defines a metric and that $\rho(P_{u,v})\le r \rho(u,v)$ for all $u,v\in \mathbb{F}_{29}$.\\
    We now argue that $\Delta(\mathcal{P}_{29})\ge r$. Recall from earlier that $\alpha$-metrizability can be reframed as an LP feasibility problem, \cref{obs:LP}.
In light of \cref{prop:invariant_metric}, if $\mathcal{P}$ is $G$-invariant we may assume that $x_{a,b}=x_{u,v}$ whenever $b-a \equiv v-u \bmod~29$.
In our specific case, the system $\mathcal{P}_p$ is $t$-metric iff the following system of inequalities is satisfied: 
\begin{equation}\label{eq:LP}
    \begin{split}
        3w_1 &\le t w_3 \\
        2w_{a/2} & \le t w_{a} \hspace{2cm} \forall a\in \mathbb{F}_{p} \text{ s.t. $a\neq \pm 3$ is a non-residue} \\
        w_{a} & \le w_{b} + w_{c} \hspace{1.4cm} \forall a,b,c \in \mathbb{F}_{p} \  \text{ s.t. } a = b+c \\
        w_{a} & = w_{-a} \hspace{2.15cm} \forall a\in \mathbb{F}_{p}\\
        w_{0} & = 0 \\
        w_{a} &\ge  1 \hspace{2.7cm} \forall a\in \mathbb{F}_{p}
    \end{split}
\end{equation}
In our case, for $p=29$ we consider the following subsystem of \eqref{eq:LP} 
\begin{equation}\label{eq:reduced_LP}
	\begin{split}
		3w_{1}& \le t w_{3}\\
		2w_{4}& \le t w_{8}\\
		2w_{5}& \le t w_{10}\\
		2w_{9}& \le t w_{11}\\
		2w_{7}& \le t w_{14}
	\end{split}
	\hspace{5cm}
	\begin{split}
		w_{3}& \le w_{1}+w_{4}\\
		w_{8}& \le w_{1}+w_{9}\\
		w_{10}& \le w_{1}+w_{9}\\
		w_{11}& \le w_{4}+w_{7}\\
		w_{14}& \le w_{5}+w_{9}
	\end{split}
\end{equation}
where we have identified $w_{a} = w_{-a}$. In order to obtain a lower bound for $\Delta(\mathcal{P}_{29})$, we determine the minimum $t\ge 1$ such that the system \eqref{eq:reduced_LP} is feasible. We have already seen that $\mathcal{P}_{29}$ is $r$-metric, whence this minimum value is in the interval $[1,r]$. To determine feasibility we apply Fourier–Motzkin elimination \cite{SW} to the variables 
$w_{3}$, $w_{8}$, $w_{10}$, $w_{11}$, $w_{14}$, $w_{7}$, $w_{4}$, $w_{5}$, $w_{1}$,  in that order. This process yields the following inequality
\begin{equation}\label{eq:ineq}
	\begin{split}
		0& \le  w_{9}(-2t^4-t^3 + 16t^2 + 8t - 24) = -w_{9}(t+2)(2t^3-3t^2-10t+12).
	\end{split}
\end{equation}
In other words, the system \eqref{eq:reduced_LP} is feasible only if \eqref{eq:ineq} is satisfied.
Since $w_{9}$ is strictly positive, for $1\le t \le r$ the inequality \eqref{eq:ineq} is satisfied only when $t= r$.
\end{proof}

\section{Open Questions}
Many questions in this domain remain open, starting with the obvious ones:
\begin{open}
Consider $\max\Delta(\mathcal{P}_n)$
over all consistent $n$-point path systems $\mathcal{P}_n$.
\begin{itemize}
\item 
Can you close the gap between the upper and lower bounds established here?
\item 
Can you find other classes of graphs $G$ and path systems with 
large $\Delta$? For other Cayley graphs? For less symmetric graphs?
\end{itemize}
\end{open}
A path system in a graph $G=(V,E)$ is called {\em neighborly}
if it uses every edge in $G$. In other words, $P_{x,y}=xy$
whenever $xy\in E$. The questions that we conider here have their
graph-theoretic counterparts. In particular,
\begin{open}
Consider the following parameter of a graph $G$:
\[\delta(G)=\max\{\Delta(\mathcal{P})|\mathcal{P} \text{~is a consistent neighborly path system on~}G\}.\]
\begin{itemize}
\item 
What is the computational complexity of this graph parameter?
\item 
How is it distributed among $n$-vertex graphs?
\end{itemize}
\end{open}
\noindent
The parameter $\Delta$ induces a gradation on consistent
path systems. For $1\le t\le O(\sqrt{n})$, let
\[\cal{C}\rm_{n,t}:= \{\text{consistent~}
n\text{-point path systems~} \mathcal{P} \text{~with~} \Delta(\mathcal{P})\le t\}\]
With an eye to our paper \cite{CL3}, we ask:
\begin{open}
How does the cardinality of the set $|\cal{C}\rm_{n,t}|$ grow with 
$t$ and $n$?
\end{open}
The graphic version of this notion is also of interest: Let
\[\cal{G}\rm_{n,t}:=\{n\text{-vertex graphs~}G,
\text{~such that~}\Delta(\mathcal{P})\le t \text{~for every consistent path
system~}\mathcal{P}\text{~on~}G\}\]

In view of our earlier work \cite{CL, CCL2} we ask:

\begin{open}
Does the class $\cal{G}\rm_{n,t}$ have any interesting 
structural properties? Specifically, is it minor-closed?
Or closed under topological minors?
\end{open}

\section*{Acknowledgments}
We would like to thank Alex Scott for his insightful questions and discussions. 

\printbibliography

\end{document}